\def\N{\mathbb{N}}\def\Z{\mathbb{Z}}\def\E{\mathbb{E}}\def\R{\mathbb{R}}
\def\d{\frac{1}{2}\, }
\newcommand{\bel}{\begin{equation}\label}
\newcommand{\ee}{\end{equation}}
\def\R{{\mathbb R}}
\def\C{{\mathbb C}}
\def\N{{\mathbb N}}
\def\Z{{\mathbb Z}}
\def\tr{\;\mathrm{tr}\,}
\def\<{\langle}
\def\>{\rangle}
\def\P{{\mathbb P}}
\def\E{{\mathbb E}}
\def\eps{\epsilon}
\def\bel{\begin{equation}\label}
\def\ee{\end{equation}}
\newtheorem{theorem}{Theorem}[section]
\newtheorem{proposition}[theorem]{Proposition}
\author{G\'erard Letac\thanks {Institut de Math\'ematiques, Universit\'e de Toulouse, 31062 Toulouse, France. email \texttt{gerard.letac@math.univ-toulouse.fr}} and Jacek Weso\l owski\thanks{Wydzia{\l} Matematyki i Nauk Informacyjnych, Politechnika Warszawska, Warszawa, Poland, e-mail: \texttt{wesolo@mini.pw.edu.pl}}}
\date{\today}
\title{About an extension of the Matsumoto-Yor property}
\begin{document}  \maketitle
	\begin{abstract}
		If $\alpha,\beta>0$ are distinct and if $A$ and $B$ are independent non-degenerate positive random variables such that
		$$S=\tfrac{1}{B}\,\tfrac{\beta A+B}{\alpha A+B}\quad \mbox{and}\quad T=\tfrac{1}{A}\,\tfrac{\beta A+B}{\alpha A+B}
		$$
		are independent, we prove that this happens if and only if the $A$ and $B$ have generalized inverse Gaussian distributions with suitable parameters. Essentially, this has already been proved in Bao and Noack (2021) with supplementary hypothesis on existence of smooth densities. 
		
		The sources of these questions are an observation about independence properties of the exponential Brownian motion due to Matsumoto and Yor (2001) and a recent work of Croydon and Sasada (2000) on random recursion models rooted in the discrete Korteweg - de Vries equation, where the above result was conjectured.
		
		We also extend the direct result to random matrices proving that a matrix variate  analogue of the above independence property is satisfied by independent matrix-variate GIG  variables. The question of characterization of GIG random matrices through this independence property remains open.   
	\end{abstract}
	
\textit{Keywords}: Bessel differential equation,  GIG distribution, discrete Korteweg - de Vries equation, Matsumoto-Yor property, Matrix GIG distribution

\section{Detailed balance equation for discrete KdV model}
	
	Croydon and Sasada (2020) introduced two so called detailed balance equations. For type I model we say that a pair of probability measures  $\mu$  and $\nu$ on some spaces $\mathcal U$ and $\mathcal V$ satisfies the detailed balance equation for a map $F=(F_1,F_2):\mathcal U\times \mathcal V\to \mathcal U\times \mathcal V$ iff 
	\bel{dbe}
	F(\mu\otimes\nu)=\mu\otimes\nu,
	\ee
	where $F(\mu\otimes\nu)$ means $(\mu\otimes\nu)\circ F^{-1}$. 
	
For type II model we say that two pairs of probability measures  $\mu$, $\nu$ on  $\mathcal U$, $\mathcal V$ 	and $\widetilde\mu$, $\widetilde\nu$ on $\widetilde{\mathcal U}$,  $\widetilde{\mathcal V}$ satisfy the detailed balance equation for a map $F=(F_1,F_2):\mathcal U\times \mathcal V\to \widetilde{\mathcal U}\times\widetilde{\mathcal V} $ iff 
	\bel{dbII}F(\mu\otimes\nu)=\widetilde\mu\otimes\widetilde\nu.\ee
	In other terms for the type II model, one considers independent random variables $X$ and $Y$ with distributions $\mu$ and $\nu$ such that  random variables $U=F_1(X,Y)$ and $V=F_2(X,Y)$  are independent and have distributions $\widetilde\mu$ and $\widetilde{\nu}$, respectively.  Type I model is more restrictive since it imposes the additional condition that $X\stackrel{d}{=}U$ and $Y\stackrel{d}{=}V$.  
	
	As an example of type II model, if $\mathcal U=\mathcal V=\widetilde{\mathcal U}=\widetilde{\mathcal V}=(0,\infty)$ and 
$F(x,y)=(x+y,\frac{1}{x}-\frac{1}{x+y})$ Matsumoto and Yor (2001) have observed that if $X$ and $Y$ are respectively Gamma and generalized inverse Gaussian (GIG)  distributed with suitable parameters and independent, the same is true for the pair $U,V$ with different parameters (a description of the GIG distribution and of its limiting cases appears  in Section 2 and in Section 3 respectively). This phenomena has been called the Matsumoto-Yor (MY) property, a name  coined by Stirzaker (2005), p. 43. In Letac and Weso\l owski (2000) a characterisation by independence of $X$, $Y$ and independence of $U$, $V$ of these pairs Gamma, GIG is given in .

In the present paper we investigate a type II model, again with $\mathcal U=\mathcal V=\widetilde{\mathcal U}=\widetilde{\mathcal V}=(0,\infty)$ which has been considered by Croydon and Sasada (2020), Sec. 3.2. They introduced the involutive map $F_{dK}^{(\alpha,\beta)}:(0,\infty)^2\to(0,\infty)^2$ defined by 
	\bel{fdk}
	F_{dK}^{(\alpha,\beta)}(x,y)=\left(y\tfrac{\beta xy+1}{\alpha xy+1},\,x\tfrac{\alpha xy+1}{\beta xy+1}\right)
	\ee
	for $\alpha,\beta\ge 0$ and $\alpha\neq 
\beta$ in connection with the modified discrete  Korteweg - de Vries (mKdV) model. The discrete mKdV model  can be  described by the following dynamics: to each point $(n,t)\in \Z^2$ one associates a vector $(x_n^t,y_n^t)\in \R^2$ defined by $(x_n^{t-1},y_{n-1}^t)$  and by the formula 
$$(x_n^t,y_n^t)=F_{dK}^{(\alpha,\beta)}(x_n^{t-1},y_{n-1}^t).$$ For instance $(x_n^t,y_n^t)$ is known for all $n,t>0$ if it is known along $(0,n)$
and $(t,0)$ for all $n,t\geq 0.$

If $X$ and $Y$ are independent as well as $U$ and $V$ defined by $(U,V)=F_{dK}^{(\alpha, \beta)}(X,Y)$ the fact that $F_{dK}^{(\alpha, \beta)}$ is involutive permits to create a stationary measure on $\Z^2$ for the above dynamics  by taking $ (x_n^t,y_n^t)=(X,Y)$ or $(U,V)$ according to the fact that $t+n$ is even or odd. The straight discrete KdV model corresponds to the particular case $\beta=0.$
For details, see Sect. 2.2 in Croydon, Sasada and Tsujimoto (2020) with slightly different notations. 
	
	Croydon and Sasada (2020) observed that the choice of $\mu$ and $\nu$ as generalized inverse Gaussian (GIG) distributions  with suitable parameters fits with the detailed balance equation of type II model . Actually, their Proposition 3.9 considers the type I model, but the proof translates immediately to the type II model with $\widetilde\mu$ and $\widetilde{\nu}$ being also GIG distributions. Furthermore, their Conjecture 8.6 predicts that these sets of GIG distributions are the only possible ones for the type II model. This has been proved by Bao and Noack (2021) under the technical assumptions that the involved distributions have strictly positive densities on $(0,\infty)$ with second derivatives. Their approach was based on expressing the independence condition through the functional equation for logarithms $r_X$, $r_Y$, $r_U$ and $r_V$ of densities of $X$, $Y$, $U$ and $V$. They proved that the Jacobian of the change of variable $(u,v)=F_{dK}^{(\alpha,\beta}(x,y)$ is  $-1$. Consequently, they identified  the functional equation to be solved as
	$$
	r_U\left(\tfrac{y(1+\beta xy)}{1+\alpha xy}\right)+r_V\left(\tfrac{x(1+\alpha xy)}{1+\beta xy}\right)=r_X(x)+r_Y(y),\qquad x,y>0.
	$$
	
	The first main result of the present paper proves the Croydon and Sasada Conjecture 8.6 in full generality. The technique uses a extended Laplace transform $L_X$ of a positive random variable $X$ defined by  
	\bel{lapl}L_X(s,\sigma,\theta)=\E\,X^s\,e^{\sigma X+\frac{\theta}{X}},\qquad(s,\sigma,\theta)\in\R\times(0,\infty)^2
	\ee and leads to a related function $g_s$ which satisfies the classical Bessel differential equation, see e.g. Watson (1966), Sec. 3.7,
	$$
	z^2g_s''(z)+zg_s'(z)-(z^2+\nu^2)\,g_s(z)=0.
	$$ One can mention here that the main tool of the paper  Letac and Weso\l owski (2000) is the simpler function $(\sigma,\theta)\mapsto \E\,  e^{\sigma X+\frac{\theta}{X}}.$
	
	To describe our second main result we note that the limiting cases for $\alpha=0$ or $\beta=0$ have been considered in Letac and Weso\l owski (2000). In particular, that paper deals with the natural extension from the domain $(0,\infty)$ to the cone of positive definite matrices of order $r$, which we denote by $\Omega_+$. Following the matrix variate context, here, for $\alpha,\beta>0$ we prove that matrix variate GIG distributions, which we rather denote MGIG, satisfy the detailed balance equation for type II model with the function $F_{dK}^{(\alpha,\beta)}:\Omega_+^2\to\Omega_+^2$ defined by
	$$
	F_{dK}^{(\alpha,\beta)}(x,y)=\left(y(I+\alpha xy)^{-1}(I+\beta xy),\,x(I+\beta yx)^{-1}(I+\alpha yx)\right),
	$$
	where $x$ and $y$ are positive definite matrices of order $r$ and $I$ is the identity matrix.
	
	Section 2 describes the GIG distribution and states our  first main result in Theorem \ref{Th:psiAB}. Section 3 examines the limiting cases of $\alpha$ or $\beta$ equal zero. The long Section 4 gives the proof of Theorem \ref{Th:psiAB}.  Section 5 considers the symmetric matrices generalization and gives our second main result. Section 6 comments on references about the MY property.

\section{GIG laws and the characterization}
	Denote by $\mathrm{GIG}(\lambda,a,b)$, $\lambda\in\R$, $a,b>0$, the generalized inverse Gaussian distribution. It is defined by  the density
	$$
	f(x)=\left(\tfrac{a}{b}\right)^{\lambda/2}\,\tfrac{1}{2K_\lambda(2\sqrt{ab})}\, x^{\lambda-1}\,\exp\left(-ax-\tfrac{b}{x}\right)\,I_{(0,\infty)}(x),
	$$
	where $K_{\nu}$ is the modified Bessel function of the third kind, see \eqref{besse}. 
	
	The following reciprocity property of the GIG distribution is well known and easily seen: if a random variable $X$ has the $\mathrm{GIG}(\lambda,a,b)$ distribution, then the distribution of $X^{-1}$ is $\mathrm{GIG}(-\lambda,b,a)$. A  less known property of GIG is the form of the extended  Laplace transform defined in \eqref{lapl}, which has the form
	\begin{equation}
	\label{LX}
	L_X(s,\sigma,\theta)=\left(\tfrac{b-\theta}{a-\sigma}\right)^{\tfrac{\lambda+s}{2}}\,\left(\tfrac{a}{b}\right)^{\tfrac{\lambda}{2}}\,\tfrac{K_{\lambda+s}(2\sqrt{(a-\sigma)(b-\theta)})}{K_{\lambda}(2\sqrt{ab})}
		\end{equation}
		for $s\in\R$, $\sigma<a$ and $\theta<b$. 
		Note that $L_X(0,\sigma,\theta)$ as above with $s=0$ uniquely determines the law $X\sim\mathrm{GIG}(\lambda,a,b)$. 
	Actually $L(s,\theta,\sigma)$ of the above form for any fixed $s$ determines the GIG distribution as shown by the next proposition.
	\begin{proposition}
		Assume that there exists $s\in\R$ such that $$L_X(s,\theta,\sigma)=c\left(\tfrac{b-\sigma}{a-\theta}\right)^{\tfrac{\nu}{2}}\,K_\nu(2\sqrt{(a-\theta)(b-\sigma)}),\quad \theta<a,\;\sigma<b$$
		for some constants $a,b,c>0$. Then $X\sim\mathrm{GIG}(\nu-s,a,b)$.
	\end{proposition}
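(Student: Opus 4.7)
The plan is to reinterpret the hypothesis as an equality of two ``double Laplace transforms'' in $(\sigma,\theta)$ and then to invoke standard uniqueness of Laplace transforms to identify the distribution $P_X$ of $X$. After the harmless relabeling $\sigma\leftrightarrow\theta$ needed to match the convention of \eqref{lapl}, the hypothesis reads
\[
\int_0^\infty x^se^{\sigma x+\theta/x}\,P_X(dx)=c\left(\tfrac{b-\theta}{a-\sigma}\right)^{\nu/2}K_\nu\!\bigl(2\sqrt{(a-\sigma)(b-\theta)}\bigr),\qquad \sigma<a,\ \theta<b.
\]
Introducing the $\sigma$-finite measure $\nu_s$ on $(0,\infty)$ by $d\nu_s(x):=x^s\,P_X(dx)$, the left-hand side is simply $\int e^{\sigma x+\theta/x}\,d\nu_s(x)$.

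Next I would match the right-hand side against the double Laplace transform of an explicit measure. Using the standard integral representation
\[
\int_0^\infty t^{\nu-1}e^{-ut-v/t}\,dt=2\left(\tfrac{v}{u}\right)^{\nu/2}K_\nu(2\sqrt{uv}),\qquad u,v>0,
\]
with $u=a-\sigma$ and $v=b-\theta$, the right-hand side of the hypothesis can be rewritten as $\int_0^\infty e^{\sigma t+\theta/t}\,d\rho(t)$, where $d\rho(t):=\tfrac{c}{2}\,t^{\nu-1}e^{-at-b/t}\,dt$. Thus the two $\sigma$-finite measures $\nu_s$ and $\rho$ on $(0,\infty)$ have identical double Laplace-type transforms on the open strip $(-\infty,a)\times(-\infty,b)$.

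To pass from equal transforms to equal measures I would freeze $\theta=\theta_0<b$: the measures $e^{\theta_0/x}\,d\nu_s$ and $e^{\theta_0/x}\,d\rho$ are then finite (by evaluating at some $\sigma_0<a$) and have equal ordinary Laplace transforms in $\sigma$ on the open interval $(-\infty,a)$, so the classical injectivity of the one-parameter Laplace transform forces them to coincide; dividing out the weight $e^{\theta_0/x}$ gives $\nu_s=\rho$. Dividing once more by $x^s$ yields
\[
P_X(dx)=\tfrac{c}{2}\,x^{\nu-s-1}e^{-ax-b/x}\,I_{(0,\infty)}(x)\,dx,
\]
and one final application of the Bessel integral identity, now with $\nu$ replaced by $\nu-s$, pins down $c=(a/b)^{(\nu-s)/2}/K_{\nu-s}(2\sqrt{ab})$, so that $P_X$ is exactly the $\mathrm{GIG}(\nu-s,a,b)$ law.

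The only mildly delicate point is the uniqueness of the two-parameter transform $(\sigma,\theta)\mapsto\int e^{\sigma x+\theta/x}\,d\mu(x)$ for $\sigma$-finite measures on $(0,\infty)$, but the slicing argument above reduces it at once to the classical one-variable case, so I do not anticipate any genuine obstacle in executing this plan.
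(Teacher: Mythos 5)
Your proof is correct, but it follows a genuinely different route from the paper's. The paper's proof uses an exponential-tilting device: it fixes $\tilde a<a$, $\tilde b<b$, introduces the tilted random variable $Y$ with $\P_Y(dx)\propto x^s e^{\tilde a x+\tilde b/x}\,\P_X(dx)$, computes $L_Y(0,\theta,\sigma)$ from the hypothesis, recognizes it as the $s=0$ transform of a $\mathrm{GIG}(\nu,a-\tilde a,b-\tilde b)$ law via \eqref{LX}, and then un-tilts to recover the density of $X$. The appeal to injectivity is thereby compressed into the previously stated (unproved) remark that $L_X(0,\cdot,\cdot)$ characterizes the GIG law. Your proof instead converts the right-hand side directly into a bilateral Laplace transform by means of the Bessel integral $\int_0^\infty t^{\nu-1}e^{-ut-v/t}\,dt = 2(v/u)^{\nu/2}K_\nu(2\sqrt{uv})$, and then proves the needed two-variable injectivity from scratch by freezing $\theta$ and invoking classical one-variable Laplace uniqueness for finite measures. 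Both arguments rest on Laplace-transform injectivity at bottom; yours is more self-contained (it does not assume that the bilateral transform characterizes GIG, but proves the needed uniqueness via the slicing argument), while the paper's is shorter once that fact is taken as known and avoids writing out the Bessel integral representation explicitly. As a cosmetic point, you do not actually need to determine the constant $c$: once $\P_X(dx)\propto x^{\nu-s-1}e^{-ax-b/x}\,dx$, the fact that $\P_X$ is a probability measure already forces the normalizing constant, which is exactly how the paper concludes.
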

	\begin{proof}
		Take $\tilde a<a$ and $\tilde b<b$. Consider a random variable $Y$ with the distribution
		$$
		\P_Y(dx)=\tfrac{x^se^{\tilde a x+\frac{\tilde b}{x}}\,\P_X(dx)}{L_X(s,\tilde a,\tilde b)}.
		$$
		Then for $\theta<a-\tilde a$ and $\sigma<b-\tilde b$ we have
		$$
		L_Y(0,\theta,\sigma)=\tfrac{L_X(s,\tilde a+\theta,\tilde b+\sigma)}{L_X(s,\tilde a,\tilde b)}=\left(\tfrac{(b-\tilde b-\sigma)(a-\tilde a)}{(a-\tilde a-\theta)(b-\tilde b)}\right)^{\nu/2}\,\tfrac{K_\nu(2\sqrt{(a-\tilde a-\theta)(b-\tilde b-\sigma)})}{K_\nu(2\sqrt{(a-\tilde a)(b-\tilde b)})}.
		$$
		Comparing this form with \eqref{LX} we conclude that $Y\sim \mathrm{GIG}(\nu,a-\tilde{a},b-\tilde b)$. Consequently,
		$$
		\P_X(dx)\propto\tfrac{1}{x^se^{\tilde a x+\frac{\tilde b}{x}}}\,\,\P_Y(dx)=\left(\tfrac{b-\tilde b}{a-\tilde a}\right)^{\nu/2}\,\tfrac{x^{\nu-s-1}\,\exp\left(-ax-\tfrac{b}{x}\right)}{K_{\nu}(2\sqrt{(a-\tilde a)(b-\tilde b)})}.
		$$
		Thus, $\P_X(dx)\propto x^{\nu-s-1}\,\exp\left(-ax-\tfrac{b}{x}\right)$ and the result follows.
	\end{proof}
	
The result below gives the solution of the Croydon-Sasada conjecture for $\alpha,\beta>0$ and $F_{dK}^{(\alpha,\beta)}$  defined by \eqref{fdk}. As said before, it has  already been proved by Bao and Noack (2021) with extra hypothesis and  different techniques.
	
	\begin{theorem}\label{Th:FdK}
		Let $X$ and $Y$ be non-Dirac, non-negative independent random variables. Let $\alpha>0$ and $\beta>0$ be distinct real numbers. Let
		$$
		(U,V)=F_{dK}^{(\alpha,\beta)}(X,Y).
		$$
		If $U$ and $V$ are independent then there exist $c_1,c_2>0$ and $\lambda\in \R$ such that \bel{XY}
		X\sim\mathrm{GIG}(-\lambda,\alpha c_1,c_2)\quad\mbox{and}\quad Y\sim \mathrm{GIG}(-\lambda,\beta c_2,c_1)
		\ee 
		and 
		\bel{UV}
		U\sim\mathrm{GIG}(-\lambda,\alpha c_2, c_1)\quad \mbox{and}\quad V\sim \mathrm{GIG}(-\lambda,\beta c_1,c_2).
		\ee
		
	\end{theorem}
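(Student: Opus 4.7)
The plan is to convert both independence hypotheses into a single functional equation for the extended Laplace transforms $L_X, L_Y, L_U, L_V$, then extract the laws by reducing this equation to the modified Bessel ODE and invoking the Proposition above to identify them as GIG.

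The first step rests on two algebraic identities immediate from \eqref{fdk}: $UV = XY$, and on every level set $\{XY = t\}$ the map $F_{dK}^{(\alpha,\beta)}$ acts by $(X, Y) \mapsto (X/c(t),\, c(t) Y)$ with $c(t) = (\beta t + 1)/(\alpha t + 1)$. Applying the independence $U \perp V$ to the test function $(u, v) \mapsto u^s v^s e^{\sigma u + \theta /v}$ and rewriting the left-hand side in $(X, Y)$ variables produces the identity
$$L_U(s, \sigma, 0)\, L_V(s, 0, \theta) = \E\bigl[(XY)^s e^{c(XY)(\sigma Y + \theta/X)}\bigr],$$
whose right-hand side can also be computed from $X \perp Y$ as an iterated integral against $P_X$ and $P_Y$. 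A companion identity, with $c(XY)$ replaced by its reciprocal, arises from the test function $(u, v) \mapsto u^s v^s e^{\theta/u + \sigma v}$.

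The second step is to differentiate in $\sigma$ and $\theta$ and to exploit the shift parameter $s$, which is precisely why the full extended Laplace transform is needed rather than the simpler $\E e^{\sigma X + \theta / X}$ used in Letac--Weso\l owski (2000). After a reparametrization of the form $z = 2\sqrt{(a - \sigma)(b - \theta)}$, with $a, b > 0$ to be pinned down from low-order moments, I expect these identities to collapse into an ordinary differential equation
$$z^2 g_s''(z) + z g_s'(z) - (z^2 + \nu^2) g_s(z) = 0,$$
with $\nu$ linear in $s$ and in some fixed real number $\lambda$, satisfied by a section $g_s$ of $L_X$ and analogously of $L_Y, L_U, L_V$.

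Finally, positive solutions of this modified Bessel equation with the correct behaviour at $0$ and $\infty$ are multiples of $K_\nu$, so $L_X$ is forced into the form \eqref{LX}; the Proposition above then identifies $X$ as a $\mathrm{GIG}$ law, and similarly for the other three variables. Matching the four parameter triples against the algebraic constraints coming from \eqref{fdk} — in particular the way $\alpha$ and $\beta$ enter the scale parameters $\alpha c_1, \beta c_2, \ldots$ — pins them down to the family \eqref{XY}--\eqref{UV} in $\lambda, c_1, c_2$. The main obstacle lies in the second step: the factor $c(XY)$ intertwines $X$ and $Y$ inside the exponential, and engineering the differentiations and change of variable so that all residual coupling collapses into a single-variable Bessel ODE is where the genuine technical work sits.
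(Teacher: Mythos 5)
Your overall strategy — reduce to a functional equation for extended Laplace transforms, derive a modified Bessel ODE in a variable like $2\sqrt{(a-\sigma)(b-\theta)}$, discard the $I_\nu$ branch by boundedness at infinity, and close with the GIG-identification proposition — is the right shape, and it is indeed what the paper does. But your first step is not actually usable, and this is a genuine gap rather than a detail. The identity you write,
$$L_U(s,\sigma,0)\,L_V(s,0,\theta)=\E\bigl[(XY)^s e^{c(XY)(\sigma Y+\theta/X)}\bigr],$$
is correct as an equation, but its right-hand side does \emph{not} factor over $X\perp Y$: the factor $c(XY)=(\beta XY+1)/(\alpha XY+1)$ multiplies both $Y$ and $1/X$ inside the exponent and so couples them. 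You acknowledge this coupling yourself and bill ``engineering the differentiations so that all residual coupling collapses'' as the hard part — but no amount of differentiation in $\sigma$, $\theta$, or the shift $s$ will remove a multiplicative coupling that is present at $s=0$. As stated, your functional equation does not produce an equality between products of four one-variable transforms, which is the whole point of the device.

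The paper's solution is to pick a \emph{different} test function, chosen so that the exponent decouples on \emph{both} sides at once. In $(X,Y,U,V)$ notation the relevant algebraic identities are not just $UV=XY$ but also the two additive ones
$$\alpha U+\tfrac{1}{V}=\tfrac{1}{X}+\beta Y\qquad\text{and}\qquad \alpha X+\tfrac{1}{Y}=\tfrac{1}{U}+\beta V,$$
which are exactly the paper's \eqref{ALPHA} and \eqref{BETA} after the change of variables $(A,B)=(X,1/Y)$, $(S,T)=(U,1/V)$. With these, the single quantity $(XY)^s\exp\{\sigma(\tfrac1X+\beta Y)+\theta(\alpha X+\tfrac1Y)\}$ equals $(UV)^s\exp\{\sigma(\alpha U+\tfrac1V)+\theta(\tfrac1U+\beta V)\}$ and factorizes into a product of a function of $X$ times a function of $Y$ \emph{and} into a product of a function of $U$ times a function of $V$; that is what yields the clean relation $x_{-s}y_s=u_{-s}v_s$ the rest of the argument hangs on. Beyond this, your sketch also skips substantial work that the paper has to do after the functional equation is in hand: taking $\partial^2_{\theta\sigma}\log$ produces the dichotomy $(\alpha\widetilde u_{-s}-\beta\widetilde y_s)(\widetilde v_s-\widetilde y_s)=0$, which requires a meromorphic-continuation argument to globalize and then a separate probabilistic impossibility argument (adding an independent Gaussian to derive a support contradiction) to rule out one branch; and the reduction to separated ODEs in $\sigma$ and $\theta$ requires a careful analysis of the exceptional set $\Lambda$ of $s$ where the separating functions $G_s', H_s'$ might vanish. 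None of these appear in your outline, and the first of them is not routine.
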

	
	We now prove that  Theorem \ref{Th:FdK} is equivalent to the following theorem.
	\begin{theorem}\label{Th:psiAB}
		Let $A$ and $B$ be non-Dirac, non-negative independent random variables. Let $\alpha>0$ and $\beta>0$ be distinct real numbers. Let	$$S=\tfrac{1}{B}\,\tfrac{\beta A+B}{\alpha A+B}\quad \mbox{and}\quad T=\tfrac{1}{A}\,\tfrac{\beta A+B}{\alpha A+B}
		$$
		If $S$ and $T$ are independent then there exist $c_1,c_2>0$ and $\lambda\in \R$  such that
		\bel{AB} A\sim\mathrm{GIG}(-\lambda,\alpha c_1,c_2)\quad\mbox{and}\quad B\sim \mathrm{GIG}(\lambda,c_1,\beta c_2)
		\ee
		and
		\bel{ST}S\sim \mathrm{GIG}(-\lambda,\alpha c_2, c_1)\quad\mbox{and}\quad T\sim\mathrm{GIG}(\lambda,c_2,\beta c_1).\ee
	\end{theorem}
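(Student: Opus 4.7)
My strategy is to reduce Theorem \ref{Th:psiAB} directly to Theorem \ref{Th:FdK} by a reciprocal change of variables, then translate the resulting GIG parameters using the reciprocity property recalled in Section~2 (namely $Z\sim\mathrm{GIG}(\lambda,a,b)$ iff $1/Z\sim\mathrm{GIG}(-\lambda,b,a)$). Since the change of variables is a bijection of $(0,\infty)^2$, the equivalence will hold in both directions.

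The key substitution is $X:=A$ and $Y:=1/B$. Under this substitution,
\[
\frac{\beta A+B}{\alpha A+B}=\frac{\beta X+1/Y}{\alpha X+1/Y}=\frac{\beta XY+1}{\alpha XY+1},
\]
so that
\[
S=\frac{1}{B}\cdot\frac{\beta XY+1}{\alpha XY+1}=Y\cdot\frac{\beta XY+1}{\alpha XY+1},\qquad
T=\frac{1}{A}\cdot\frac{\beta XY+1}{\alpha XY+1}=\frac{1}{X}\cdot\frac{\beta XY+1}{\alpha XY+1}.
\]
Comparing with \eqref{fdk}, this gives $S=U$ and $T=1/V$, where $(U,V)=F_{dK}^{(\alpha,\beta)}(X,Y)$.

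Since $y\mapsto 1/y$ is a bijection of $(0,\infty)$, the independence of $A$ and $B$ is equivalent to that of $X$ and $Y$, and similarly the independence of $S$ and $T$ is equivalent to that of $U$ and $V$. Hence the hypotheses of Theorem \ref{Th:psiAB} on $(A,B)$ and $(S,T)$ are equivalent to those of Theorem \ref{Th:FdK} on $(X,Y)$ and $(U,V)$. Applying Theorem \ref{Th:FdK} produces $c_1,c_2>0$ and $\lambda\in\R$ for which \eqref{XY}--\eqref{UV} hold; then reciprocity converts $Y\sim\mathrm{GIG}(-\lambda,\beta c_2,c_1)$ into $B=1/Y\sim\mathrm{GIG}(\lambda,c_1,\beta c_2)$ and $V\sim\mathrm{GIG}(-\lambda,\beta c_1,c_2)$ into $T=1/V\sim\mathrm{GIG}(\lambda,c_2,\beta c_1)$, yielding exactly \eqref{AB} and \eqref{ST}. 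Running the argument backwards yields the converse implication, establishing the equivalence.

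There is no real obstacle: the reduction is purely algebraic, and the only point requiring care is the bookkeeping of sign flips on $\lambda$ and the swap of the $(a,b)$ parameters under inversion.
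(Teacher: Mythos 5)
Your reduction is algebraically correct: with $X=A$, $Y=1/B$ one indeed has $S=U$ and $T=1/V$ where $(U,V)=F_{dK}^{(\alpha,\beta)}(X,Y)$, the independence hypotheses transfer across the bijection $(a,b)\mapsto(a,1/b)$, and the GIG reciprocity property converts \eqref{XY}--\eqref{UV} into \eqref{AB}--\eqref{ST} with the parameter bookkeeping exactly as you state. This is in fact precisely the passage in Section~2 of the paper that establishes the \emph{equivalence} of Theorem~\ref{Th:FdK} and Theorem~\ref{Th:psiAB}.

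The problem is that what you have written is only that equivalence, not a proof of either theorem. You write ``Applying Theorem~\ref{Th:FdK} produces $c_1,c_2>0$ and $\lambda\in\R$...'', treating Theorem~\ref{Th:FdK} as an available black box. But in this paper Theorem~\ref{Th:FdK} is not proved independently: it is stated, then shown equivalent to Theorem~\ref{Th:psiAB} via the very substitution you use, and the entire analytic content of the paper (Section~4: extended Laplace transforms $x_s,y_s,u_s,v_s$, the factorization identity \eqref{TATRAS3}, ruling out $\alpha\widetilde u_{-s}=\beta\widetilde y_s$, the separation-of-variables argument leading to a Bessel ODE, and the identification of the $K_\nu$ solution) goes into proving Theorem~\ref{Th:psiAB} directly. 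Deducing Theorem~\ref{Th:psiAB} from Theorem~\ref{Th:FdK} is therefore circular. Nor can you cite Theorem~\ref{Th:FdK} as known from the literature: the Bao--Noack result requires smooth densities, and removing that hypothesis is exactly the point of this paper. To produce a non-circular proof you would need to supply the substance of Section~4 (or some alternative analytic argument); the change of variables alone buys you nothing.
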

	To see this equivalence it will be convenient to introduce a modification of $F_{dK}^{(\alpha,\beta)}$ of the form 
	$$\psi^{(\alpha,\beta)}=I_2^{-1}\circ F_{dK}^{(\alpha,\beta)}\circ I_2,$$
	where $I_2:(0,\infty)^2\to(0,\infty)^2$ is defined by $I_2(x,y)=\left(x,\,\tfrac{1}{y}\right)$. Since $F_{dK}^{(\alpha,\beta)}$ is involutive it is clear that $\psi^{(\alpha,\beta)}$ is also an involution on $(0,\infty)^2$. Note that
	\bel{psiab}
	\psi^{(\alpha,\beta)}(x,y)=\left(\tfrac{1}{y}\, \tfrac{\beta x+y}{\alpha x+y}, \tfrac{1}{x}\, \tfrac{\beta x+y}{\alpha x+y}\right)=\left(\tfrac{\beta}{\alpha y}+\tfrac{\alpha-\beta}{\alpha(\alpha x+y)},\,\tfrac{1}{x}-\tfrac{\alpha-\beta}{\alpha x+y}\right).
	\ee 
	For a probability measure $\nu$ of $(0,\infty)$ denote by $\nu^{(-1)}$ a measure defined by $\nu^{(-1)}(D)=\nu(1/D)$ for all Borel sets $D\subset(0,\infty)$ and $1/D=\{1/x,\;x\in D\}$. It is obvious that $\mu,\nu$ and $\widetilde\mu,\widetilde\nu$ satisfy the detailed balance equation \eqref{dbII} for $F_{dK}^{(\alpha,\beta)}$   iff  $\mu,\nu^{(-1)}$ and $\widetilde\mu,\widetilde\nu^{(-1)}$ satisfy  \eqref{dbII} for $\psi^{(\alpha,\beta)}$. Since 
	$$(S,1/T)=I_2(S,T)= F_{dK}^{(\alpha,\beta)}\circ I_2(A,B)=F_{dK}^{(\alpha,\beta)}(A,1/B)=(U,V)$$
	the equivalence  follows immediately from \eqref{XY}, \eqref{UV} and \eqref{AB}, \eqref{ST} in view of the reciprocity property of the GIG.

\section{The limiting cases of $\alpha=0$ or $\beta=0$ and the MY property}

Note that inserting $\beta=0$ in \eqref{fdk} or \eqref{psiab} gives
\bel{fdk0}
F_{dK}^{(\alpha,0)}(x,y)=\left(\tfrac{y}{\alpha xy+1},\,x\,(\alpha xy+1)\right)
\ee
and
\bel{psi0}
\psi^{(\alpha,0)}(x,y)=\left(\tfrac{1}{\alpha x+y},\,\tfrac{1}{x}-\tfrac{\alpha}{\alpha x+y}\right).
\ee
One can expect that the problem of detailed balance equation of type II model for $F_{dK}^{(\alpha,0)}$ or similarly for $F_{dK}^{(0,\beta)}$ can be derived by  taking the limit as $\alpha\to 0$ or $\beta\to 0$ in the Theorem \ref{Th:FdK} or in its equivalent Theorem \ref{Th:psiAB}. Indeed, for $\lambda>0$
$$
\mathrm{GIG}(\lambda,a,b)\stackrel{w}{\to} \mathrm{G}(\lambda,a)\quad \mbox{for }\;b\to 0,
$$
where $\stackrel{w}{\to}$ denotes the weak convergence of probability measures and $\mathrm{G}(\lambda,a)$ is the Gamma distribution defined by the density 
$$
f(x)=\tfrac{a^{\lambda}}{\Gamma(\lambda)}\,x^{\lambda-1}\,e^{-a x}\,I_{(0,\infty)}(x),
$$
Similarly, for $\lambda>0$
$$
\mathrm{GIG}(-\lambda,a,b)\stackrel{w}{\to} \mathrm{InvG}(\lambda,b)\quad \mbox{for }\;a\to 0,
$$
where $\mathrm{InvG}(\lambda,b)$ is the inverse Gamma distribution defined by the density
$$
f(x)=\tfrac{b^{-\lambda}}{\Gamma(\lambda)}\,x^{-\lambda-1}\,e^{-b/x}I_{(0,\infty)}(x).
$$ 
Note that without any loss of generality regarding the independence property we can assume that $\alpha=1$ in \eqref{fdk0} or \eqref{psi0}. Actually, we would rather consider $\psi^{(1,0)}$ which assumes the form 
$$
\psi^{(1,0)}(x,y)=\left(\tfrac{1}{x+y},\,\tfrac{1}{x}-\tfrac{1}{x+y}\right),
$$
since it is directly related to the MY property. More specifically, this property says:

Let $A\sim\mathrm{GIG}(-\lambda,c_1,c_2)$ and $B\sim \mathrm G(\lambda,c_1)$ be independent and $(S,T)=\psi^{(1,0)}(A,B)$. Then $S$ and $T$ are independent, $S\sim \mathrm{GIG}(-\lambda,c_2,c_1)$ and $T\sim \mathrm G(\lambda,c_2)$. Matsumuto and Yor (2001) discovered this property while studying the conditional structure of the exponential Brownian motion. Moreover, another paper,  Matsumoto and Yor (2003),  represents this property through hitting times of Brownian motion with drift (the case $\lambda=1/2$). 

In Proposition 2.9 of Croydon and Sasada (2020) the authors identified some  $\mathrm{GIG}$ probability measures $\mu$ and $\nu$, which satisfy the detailed balance equation \eqref{dbe}, therefore for Type I model,  with $F=F_{dK}^{(\alpha,\beta)}$ as follows: Let $c>0$ and either $\lambda\in\R$ and $\alpha\beta>0$ or $\lambda>0$ and $\alpha\beta=0$ then \bel{GIGGIG}\mu\otimes\nu=\mathrm{GIG}(-\lambda,\alpha c,c)\otimes \mathrm{GIG}(-\lambda,\beta c,c)\ee satisfies  \eqref{dbe} with $F=F_{dK}^{(\alpha,\beta)},$ with a suitable interpretation in terms of Gamma distributions of the above GIG laws when $\alpha\beta=0.$ Moreover, they observed that the characterization by the MY property given in Letac and Weso\l owski (2000) provides uniqueness of the invariant measures in case $\alpha\beta=0$. This fact was crucial for their analysis of the discrete KdV model in this special case. Searching for uniqueness of invariant measures in the generalized discrete KdV model, Croydon and Sasada (2020) conjecture our Theorem 2.2 above for $\alpha,\beta>0$, extending what was indeed already known for $\alpha\beta=0.$

\section{Proof of Theorem \ref{Th:psiAB}}
\subsection{Useful identities and extended Laplace transforms}
Observe that for $(s,t)=\psi^{(\alpha,\beta)}(a,b)$ with $a,b>0$ we have
\begin{eqnarray}\label{QUOTIENT}
&\frac{s}{t}=\frac{a}{b},\\\label{ALPHA}
&t+\alpha s=\frac{1}{a}+\frac{\beta}{b},\\\label{BETA}
&b+ \alpha a=\frac{1}{s}+\frac{\beta}{t}.
\end{eqnarray}
If $s\in\R$ and $\sigma,\theta\in (-\infty,0)$ then $(0,\infty)\ni x\mapsto x^s e^{\sigma x+\frac{\theta}{x}}$ is a bounded function. Therefore, the following functions are well defined 
\begin{eqnarray*}
	x_s(\sigma,\theta)=\E\,A^s e^{\alpha \sigma A+\frac{\theta}{A}},\quad & \quad 
	y_s(\sigma,\theta)=\E\,B^s e^{ \sigma B+\frac{\beta \theta}{B}},\\
	u_s(\sigma,\theta)=\E\,S^s e^{\alpha\theta S+\frac{\sigma}{S}},\quad &\quad 
	v_s(\sigma,\theta)=\E\,T^s e^{\theta T +\frac{\beta \sigma}{T}}, 
\end{eqnarray*}
for $(s,\sigma,\theta)\in\R\times(-\infty,0)^2$. Thus identities \eqref{QUOTIENT}, \eqref{ALPHA} and \eqref{BETA} give the equality
$$
\E\left[A^{-s}B^se^{\sigma (B+\alpha A) +\theta (\tfrac{1}{A}+\tfrac{\beta}{B})}\right]=\E\left[S^{-s}T^se^{\sigma (\tfrac{1}{S}+\tfrac{\beta}{T}) +\theta (T+\alpha S)}\right],\quad (s,\sigma,\theta)\in\R\times(-\infty,0)^2.
$$
The independence assumptions of Theorem \ref{Th:psiAB} together with the notation introduced above lead to the equation
\bel{TRICK2}
x_{-s}y_s=u_{-s}v_s,
\ee
which after taking logarithms assumes the form
\bel{TRICK3}
\log x_{-s}+\log y_s =\log u_{-s}+\log v_s.
\ee
 We now apply $\tfrac{\partial^2}{\partial \theta\partial \sigma}$ to \eqref{TRICK3}. To this aim we observe that 
\begin{eqnarray}\label{LOGPLUS}\frac{\partial^2}{\partial \theta\partial \sigma}\log v_s=\beta(1-\widetilde{v}_s),&\qquad \frac{\partial^2}{\partial \theta\partial \sigma}\log y_s\,=\beta(1-\widetilde{y}_s)
\\\label{LOGMINUS}\frac{\partial^2}{\partial \theta\partial \sigma}\log u_{-s}=\alpha(1-\widetilde{u}_{-s}),&\qquad \frac{\partial^2}{\partial \theta\partial \sigma}\log x_{-s}=\alpha(1-\widetilde{x}_{-s}),\end{eqnarray}
where we denoted
$$
 \widetilde{x}_s= \frac{x_{s-1}x_{s+1}}{x_{s}^2},\quad \widetilde{y}_s= \frac{y_{s-1}y_{s+1}}{y_{s}^2},\quad \widetilde{u}_s= \frac{u_{s-1}u_{s+1}}{u_{s}^2},\quad \widetilde{v}_s= \frac{v_{s-1}v_{s+1}}{v_{s}^2}.
$$
Consequently, \eqref{TRICK3} yields \begin{equation}\label{TATRAS1}\alpha    \widetilde{x}_{-s}+\beta \widetilde{y}_s= \alpha    \widetilde{u}_{-s}+\beta \widetilde{v}_s\end{equation}
Now observe that from \eqref{TRICK2} we have 
\begin{equation}\label{TATRAS2}\widetilde{x}_{-s}\widetilde{y}_{s}=\widetilde{u}_{-s}\widetilde{v}_{s}\end{equation}
Combining \eqref{TATRAS1} and \eqref{TATRAS2}  by eliminating either $\widetilde{x}_{-s}$ or $\widetilde{y}_{s}$ we get
\bel{TATRAS3}(\alpha \widetilde{u}_{-s}-\beta\widetilde{y}_{s})(\widetilde{v}_{s}-\widetilde{y}_{s})=(\alpha \widetilde{x}_{-s}-\beta\widetilde{v}_{s})(\widetilde{x}_{-s}-\widetilde{u}_{-s})=0.\ee

\subsection{Equality $\alpha \widetilde{u}_{-s}=\beta\widetilde{y}_{s}$ is impossible}

In this section we consider a domain $D\subset \C^3$ defined as the set of $z=(z_1,z_2,z_3)\in \C^3$ such that $\Re z_2$ and $\Re z_3$ are strictly negative.

Note that  functions $(s,\sigma,\theta)\mapsto \widetilde u_{-s}(\sigma,\theta)$ and $(s,\sigma,\theta)\mapsto \widetilde y_s(\sigma,\theta)$ are quotients of Laplace transforms on $\R^3$. These Laplace transforms are extendable to $D$ as holomorphic functions. Therefore $(s,\sigma,\theta)\mapsto f(s,\sigma,\theta)=\alpha u_{-s}(\sigma,\theta)-\beta\widetilde{y}_{s}(\sigma,\theta)$ is extendable to $D$ as a meromorphic function on $D$ (here a meromorphic function on $D$ is defined as the quotient of two holomorphic functions on $D$). Similarly, $(s,\sigma,\theta)\mapsto g(s,\sigma,\theta)=\widetilde{v}_s(\sigma,\theta)-\widetilde y_s(\sigma,\theta)$ is extendable to a meromorphic function on $D$. Furthermore, \eqref{TATRAS3} yields $f(s,\sigma,\theta)g(s,\sigma,\theta)=0$ and therefore $ f g=0$ in the field of meromorphic functions on $D.$ 
This implies that either $f(s,\sigma,\theta)=0$ for all $(s,\sigma,\theta)$ or $g(s,\sigma,\theta)=0$ for all $(s,\sigma,\theta).$ 

It is worthwhile to recall the short proof of the fact that if $f(z)g(z)=0$ in $D$ then either $f(z)=0$ for all $z\in D$ or $g(z)=0$ for all $z\in D.$ We thank A. Zeriahi for it. Indeed if $f=f_1/f_2,\ g=g_1/g_2$ where $f_1,f_2,g_1,g_2$ are holomorphic in $D$ with $f_2$ and  $g_2$ not identically zero, we must have $f_1(z)g_1(z)=0.$ 
We use the notations $k!=k_1!k_2!k_3!$ and  $$g_1^{(k)}(z)=\tfrac{\partial^{k_1+k_2+k_3} }{\partial z_1^{k_1}\partial z_2^{k_2}\partial z_3^{k_3}}g_1(z_1,z_2,z_3),\ \  (z-a)^{(k)}=(z_1-a_1)^{k_1}(z_2-a_2)^{k_2}(z_3-a_3)^{k_3}.$$  
Suppose now that there exists $z_0\in D$ such that $f_1(z_0)\neq 0$ and  consider the set 
$$D_0=\left\{z\in D; g_1^{(k)}(z)=0\,  \mathrm{for \ all}\, k\in \N^3\right\}.$$  The set $D_0$ is an open subset of $D$, since for all $a\in D_0$ the  set $$\left\{z\in D; g_1(z)=\sum_{k\in \N^3}(z-a)^{(k)}\tfrac{g_1^{(k)}(a)}{k!}\right\}$$ is a subset of $D_0$ containing a neighborhood of $a.$  Also $D_0$ is a closed subset of $D$ and $D_0$ is not empty since it contains $z_0$. Since $D$ is connected we have $D=D_0. $ The same reasoning will show that if $g(z_0)\neq 0$ then $f\equiv 0.$

Next we show that $\alpha \widetilde{u}_{-s}=\beta\widetilde{y}_{s}$
is impossible. Indeed, in that case from \eqref{LOGPLUS} and \eqref{LOGMINUS} for $s=0$ we get
$$
\alpha-\tfrac{\partial^2}{\partial\theta\partial\sigma}\,\log\, u_0=\beta-\tfrac{\partial^2}{\partial\theta\partial\sigma}\,\log\, y_0,
$$
which gives
\begin{align}
\label{eEeE}
e^{\beta\theta\sigma+H(\theta)}\,\E\,e^{\sigma B+\tfrac{\beta\theta}{B}}&=e^{\alpha\theta\sigma+G(\sigma)}\,\E\,e^{\alpha\theta S+\tfrac{\sigma}{S}},\quad (\sigma,\theta)\in(-\infty,0)^2,
\end{align}
for some functions $G$ and $H$. Plugging  $(\theta,0)$, $(0,\sigma)$ and $(0,0)$ in \eqref{eEeE} we get
\begin{align}
\label{G0}
e^{G(0)}\E\,e^{\alpha\theta S}&=e^{H(\theta)}\E\,e^{\tfrac{\beta\theta}{B}},\\
\label{H0}
e^{G(\sigma)}\,\E\,e^{\tfrac{\sigma}{S}}&=e^{H(0)}\,\E\,e^{\sigma B},\\
\label{G0H0}
e^{H(0)}&=e^{G(0)}.
\end{align}
Multiplying \eqref{eEeE}-\eqref{G0H0} side-wise and taking $\sigma=\theta$ we arrive at
\begin{equation}
\label{betasigma}
e^{\beta\theta^2}\,\E\,e^{\theta \left(B+\tfrac{\beta}{B}\right)}\,\E\,e^{\theta\alpha S}\,\E\,e^{\tfrac{\theta}{S}}=e^{\alpha\theta^2}\,\E\,e^{\theta\left(\alpha S+\tfrac{1}{S}\right)}\,\E\,e^{\theta\tfrac{\beta}{B}}\,\E\,e^{\theta B}.
\end{equation} 
Now introduce $S_1\stackrel{d}{=}S_2\stackrel{d}{=}S$ such that $S_1,S_2$ and $B$  are independent, and $B_1\stackrel{d}{=}B_2\stackrel{d}{=}B$ such that $B_1,B_2$ and $S$  are independent. Consider positive random variables $P_1=B+\tfrac{\beta}{B}+\alpha S_1+\tfrac{1}{S_2}$ and $P_2=\alpha S+\tfrac{1}{S}+\tfrac{\beta}{B_1}+B_2$. From \eqref{betasigma} we get
$$
e^{\beta\theta^2}\,\E\,e^{\theta P_1}=e^{\alpha\theta^2}\,\E\,e^{\theta P_2}.
$$
Now assume $\alpha<\beta$ and consider a Gaussian random variable $Z\sim N(0,2(\alpha-\beta))$ such that $Z$ and $P_2$ are independent. Then we get $\E\,e^{\theta P_1}=\E\,e^{\theta (P_1+Z}$, $\theta<0$, which implies $P_1\stackrel{d}{=}P_2+Z$, which is a contradiction since the support of the distribution of $P_2+Z$ is $\R$. A similar contradiction is obtained for $\beta<\alpha$.

\subsection{Exploiting the connection between  $v_s$ and $y_s$}

So, we necessarily have 
\begin{equation}\label{vvv}\widetilde v_s=\widetilde y_s.\end{equation}
Combining this equality with \eqref{LOGPLUS}  we obtain for all $s\in\R$ that
$$\frac{\partial^2}{\partial \theta\partial \sigma}(\log y_s-\log v_s)=0.$$ Thus, there exist functions $ H_s$ and $G_{s}$ satisfying
\begin{equation}\label{YV}
\log v_s(\sigma,\theta)\,=\,H_s(\theta)-G_s(\sigma)+\log y_s(\sigma,\theta).
\end{equation}
Since $v_s$ and $y_s$ are analytic functions of $\sigma$ and $\theta$ it follows that $G_s$  and $H_s$ are also analytic. 
Thus we can take derivatives of \eqref{YV} with respect to $\theta$ and $\sigma;$ whence we get \begin{eqnarray}\label {YVSIGMA}
\beta\frac{v_{s-1}(\sigma,\theta)}{v_s(\sigma,\theta)}&=&-G_s'(\sigma)+ \frac{y_{s+1}(\sigma,\theta)}{y_s(\sigma,\theta)}\\
\label {YVTHETA}
\frac{v_{s+1}(\sigma,\theta)}{v_s(\sigma,\theta)}&=&H_s'(\theta)+\beta \frac{y_{s-1}(\sigma,\theta)}{y_s(\theta,\sigma)}.\end{eqnarray}
Side-wise multiplication of  equations \eqref{YVSIGMA} and \eqref{YVTHETA} yields 
$$\beta\widetilde{v}_s(\theta,\sigma)=-H_s'(\theta)G_s'(\sigma)-\beta G_s'(\sigma)\frac{y_{s-1}(\sigma,\theta)}{y_s(\sigma,\theta)}+H_s'(\theta)\frac{y_{s+1}(\sigma,\theta)}{y_s(\sigma,\theta)}+\beta \widetilde{y}_s(\sigma,\theta),$$ 
which, in view of \eqref{vvv} yields
\begin{equation}
\label{H'G'}
H_s'(\theta)G_s'(\sigma)=H_s'(\theta)\tfrac{y_{s+1}(\sigma,\theta)}{y_s(\sigma,\theta)}-\beta G_s'(\sigma)\tfrac{y_{s-1}(\sigma,\theta)}{y_s(\sigma,\theta)}.
\end{equation}
The fact that $G'_s$ and $H'_s$ can be zero or not is crucial in the sequel. Observe that in view of \eqref{H'G'}  if $G'_s(\sigma_1)=0$ for some $\sigma_1<0$ then $H'_s(\theta)=0$ for every $\theta<0$. By symmetry we conclude that for any $s\in\R$ either $G'_s$ and $H'_s$ are identically zero or they are both non-zero for any value of arguments. Therefore $$\Lambda:=\{s\in\R:\,G'_s\equiv 0\equiv H'_s\}=\{s\in\R:\,G'_sH'_s\equiv 0\}.$$

\subsection{Properties of the set $\Lambda$}
\begin{proposition}\label{rs}
	\begin{enumerate}
		\item [(i)] If $s\in\Lambda$ then there exists a constant $C(s)>0$ such that $v_s=C(s)y_s$.
		\item[(ii)] There is no $s\in\R$ such that $s,s-1\in\Lambda$. 
		\item[(iii)] There is no $s\in\R$ such that $s\not\in\Lambda$ and $s-1,s+1\in\Lambda$.
		\item[(iv)] There exists $s\in\R$ such that $s,s-1\not\in\Lambda$.
	\end{enumerate}
\end{proposition}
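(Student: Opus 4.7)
The plan is to treat (i)--(iii) as direct algebraic consequences of \eqref{YV} and \eqref{vvv}, and reserve an analyticity-plus-connectedness argument for (iv).

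Part (i) will be immediate: $s\in\Lambda$ forces $G_s'\equiv H_s'\equiv 0$, so $G_s$ and $H_s$ are constants, and \eqref{YV} collapses to $\log v_s-\log y_s=H_s-G_s$, yielding $v_s=C(s)\,y_s$ with $C(s)>0$. For (ii), the substitution $T'=\beta/T$ will rewrite
\[
v_s(\sigma,\theta)=\beta^s\,\E\,(T')^{-s}e^{\sigma T'+\beta\theta/T'},
\]
so $v_s=C(s)y_s$ becomes, by injectivity of the two-parameter Laplace transform on $(0,\infty)$, the measure identity $P_{T'}(dx)=\tfrac{C(s)}{\beta^s}\,x^{2s}\,P_B(dx)$. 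If additionally $s-1\in\Lambda$, the same argument produces a second representation with exponent $2(s-1)$; comparing the two will force $x^2$ to be constant on $\supp P_B$, contradicting the non-Dirac hypothesis on $B$.

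For (iii), I would assume $s-1,s+1\in\Lambda$ while $s\notin\Lambda$. Then (i) gives $v_{s\pm 1}=C(s\pm 1)\,y_{s\pm 1}$, and inserting these into \eqref{vvv} yields $v_s^2=C(s-1)C(s+1)\,y_s^2$, so $v_s/y_s$ is a positive constant. But then \eqref{YV} reduces $\log v_s-\log y_s=H_s(\theta)-G_s(\sigma)$ to a constant, forcing $H_s'\equiv G_s'\equiv 0$, i.e., $s\in\Lambda$---a contradiction.

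For (iv), I would argue by contradiction: if the conclusion fails, then for every $s\in\R$ at least one of $s,s-1$ lies in $\Lambda$, so $\Lambda\cup(\Lambda+1)=\R$. Part (ii) gives $\Lambda\cap(\Lambda+1)=\emptyset$, hence $\{\Lambda,\Lambda+1\}$ is a partition of $\R$. The decisive observation will be that $\Lambda$ is closed: for each fixed $(\sigma,\theta)\in(-\infty,0)^2$ the map $s\mapsto \partial_\sigma\log(v_s/y_s)(\sigma,\theta)$ is real-analytic in $s$, and $\Lambda$ is the intersection, over all such $(\sigma,\theta)$, of the closed zero sets of these functions. Its translate $\Lambda+1$ is also closed, so connectedness of $\R$ forces one of them to be empty, contradicting their union being $\R$. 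The hard part is this closedness step, which rests on the joint analyticity of $(s,\sigma,\theta)\mapsto v_s(\sigma,\theta),y_s(\sigma,\theta)$ justified by the bounded-integrand observation at the start of Section 4; once it is in hand, the topological endgame is short.
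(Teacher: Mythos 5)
Your parts (i) and (iii) are essentially identical to the paper's: (i) reads off $v_s=C(s)y_s$ from \eqref{YV} once $G_s,H_s$ are constants, and (iii) uses \eqref{vvv} with (i) applied at $s\pm1$ to force $v_s/y_s$ constant, contradicting $s\notin\Lambda$ via \eqref{YV}. In parts (ii) and (iv) you take genuinely different routes, both of which are correct.

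For (ii), the paper argues from \eqref{YVSIGMA} (the reference to \eqref{YVTHETA} in the paper is a slip) that $y_{s+1}=ky_{s-1}$, turns this into a first-order constant-coefficient PDE for $y_s$, solves along characteristics to see $y_s=f(\sigma+k\theta)$, and then concludes that a strictly monotone function of $B$ must be a.s.\ constant. Your substitution $T'=\beta/T$ instead recasts $v_s=C(s)y_s$ as an equality of extended Laplace transforms of the two measures $x^{-s}\P_{T'}(dx)$ and $x^s\P_B(dx)$, so by injectivity you get $\P_{T'}(dx)\propto x^{2s}\P_B(dx)$; the same for $s-1$ forces $x^2$ constant on $\supp\P_B$. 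This is a clean variant — it replaces the characteristics calculation by a single measure-identification step, and it makes the role of both $s$ and $s-1$ symmetric and explicit. One small point of care: the injectivity step should be phrased for the tilted finite measures $e^{\sigma_0 x+\beta\theta_0/x}x^{\pm s}\,d(\cdot)$ at a fixed $(\sigma_0,\theta_0)\in(-\infty,0)^2$, since $x^{\pm s}\,d(\cdot)$ by itself need not be finite; once tilted, the ordinary one-parameter Laplace argument suffices.

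For (iv) you diverge the most: the paper's argument is a three-line pigeonhole from (ii) and (iii) (take $s\notin\Lambda$, which exists by (ii); (iii) forbids both $s\pm1\in\Lambda$, so one of $s\pm1\notin\Lambda$, giving a consecutive pair). Your argument bypasses (iii) entirely: from the negation you get the partition $\R=\Lambda\sqcup(\Lambda+1)$, then show $\Lambda$ is closed (as an intersection over $(\sigma,\theta)$ of zero sets of $s\mapsto -G_s'(\sigma)=\beta v_{s-1}/v_s-y_{s+1}/y_s$, which is continuous in $s$ by dominated convergence thanks to the bounded-integrand remark), and conclude by connectedness of $\R$. This is correct and has the modest interest of decoupling (iv) from (iii), but it is heavier machinery than the paper's two lines, and the closedness lemma does need to be stated and proved (continuity in $s$ for each fixed $(\sigma,\theta)$, plus the observation that $\Lambda=\{s:G_s'\equiv0\}$ is therefore a closed intersection of closed sets). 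If you adopt the paper's direct derivation of (iv), you can drop the regularity lemma altogether.
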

\begin{proof}
	\begin{enumerate}
		\item [Ad(i)] Follows immediately for \eqref{YV}.
		\item[Ad(ii)] From \eqref{YVTHETA}, in view of (i) we have
		$$
		\tfrac{\beta C(s-1)y_{s-1}}{C(s)y_s}=\tfrac{y_{s+1}}{y_s}
		$$
		which implies $y_{s+1}=ky_{s-1}$, where $k=\tfrac{\beta C(s-1)}{C(s)}$. This gives $\tfrac{\partial y_s}{\partial \theta}=k\,\tfrac{\partial y_s}{\partial \sigma}$. 
		Consequently, there exists a function $f$ defined on $(-\infty,0)$ such that $y_s(\sigma,\theta)=f(k\theta+\sigma)$. Denoting $t=k\theta+\sigma$ we get for all $t<0$ and all $\theta\in\left(\tfrac{t}{k},\,0\right)$
		$$
		y_s(\sigma,\theta)=\E\,B^s\,\exp\left(tB+\theta\left(\tfrac{\beta}{B}-kB\right)\right)=f(t).
		$$
		Therefore for $t$ fixed $\theta\mapsto y_s(\theta,t-k\theta)$ is constant on $\left(\tfrac{t}{k},\,0\right)$  and thus we conclude that $\tfrac{\beta}{B}-kB$ is constant. Since $B>0$ and non Dirac we get a contradiction.
		\item[Ad(iii)] Using \eqref{vvv} and part (i) we get $v_s^2=C(s-1)C(s+1)y_s^2$. Since $s\not\in\Lambda$ it follows from \eqref{YV} that $v_s=e^{H_s(\theta)-G_s(\sigma)}\,y_s$, which contradicts the fact that for $s\not\in\Lambda$ the function $H_s(\theta)-G_s(\sigma)$ is not constant.  
		\item[Ad(iv)] It follows from (ii) that there exists $s\not\in\Lambda$. If $s-1,s+1\in\Lambda$ it contradicts (iii). Therefore at least one of $s\pm 1$ is not in $\Lambda$. 
	\end{enumerate}
\end{proof}

\begin{proposition}
	\begin{enumerate}
		\item[(i)] If $s\not\in\Lambda$ then there exist $r_s\neq 0$, $\sigma_0(s)\ge 0$, $\theta_0(s)\ge 0$ such that for  $\theta<0$ and $\sigma<0$ we have
		\begin{equation}
		\label{DERIV}
		G'_s(\sigma)=\tfrac{r_s}{\sigma_0(s)-\sigma}\quad \mbox{and}\quad H'(\theta)=\tfrac{r_s}{\theta_0(s)-\theta}.
		\end{equation}
		Furthermore, there exist constants $C_G(s)$ and $C_H(s)$ such that for all $\theta<0$ and $\sigma<0$
		\begin{equation}\label{GH}
		G_s(\sigma)=-r_s\log(\sigma_0(s)-\sigma)+C_G(s),\quad\mbox{and}\quad H_s(\theta)=-r_s\log(\theta_0(s)-\theta)+C_H(s).
		\end{equation}
		\item[(ii)] If $s,s-1\not\in\Lambda$ then $r_s=r_{s-1}+1$, $\sigma_0(s-1)=\sigma_0(s)$, $\theta_0(s-1)=\theta_0(s)$.
	\end{enumerate}
\end{proposition}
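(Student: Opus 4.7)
Since $s\notin\Lambda$, both $G_s'$ and $H_s'$ are nowhere vanishing, so I divide \eqref{H'G'} by $G_s'(\sigma)H_s'(\theta)$. Writing $L_s := \log y_s$, this yields the first-order linear PDE
\[
\frac{\partial_\sigma L_s(\sigma,\theta)}{G_s'(\sigma)} - \frac{\partial_\theta L_s(\sigma,\theta)}{H_s'(\theta)} = 1.
\]
In the coordinates $u = G_s(\sigma)$, $v = H_s(\theta)$ (valid since $G_s, H_s$ are strictly monotonic) the PDE reads $\partial_u L_s - \partial_v L_s = 1$, whose general solution is $L_s(\sigma,\theta) = G_s(\sigma) + F_s\bigl(G_s(\sigma) + H_s(\theta)\bigr)$ for some analytic $F_s$. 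Next, differentiating under the expectation in $y_s(\sigma,\theta) = \E B^s e^{\sigma B + \beta\theta/B}$ gives the Bessel-like identity $\partial_\sigma \partial_\theta y_s = \beta y_s$, equivalent to $\partial_\sigma L_s \cdot \partial_\theta L_s + \partial_\sigma \partial_\theta L_s = \beta$. Plugging the characteristic form into this identity yields the separable equation
\[
G_s'(\sigma)\, H_s'(\theta)\, \Phi_s\bigl(G_s(\sigma)+H_s(\theta)\bigr) = \beta,\qquad \Phi_s(K) := F_s'(K)\bigl(1+F_s'(K)\bigr) + F_s''(K).
\]
Taking logarithmic partial derivatives once in $\sigma$ and once in $\theta$ and eliminating $\Phi_s'/\Phi_s$ between them gives $G_s''(\sigma)/G_s'(\sigma)^2 = H_s''(\theta)/H_s'(\theta)^2$; as the two sides depend on disjoint variables, both equal a common constant. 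Integrating the resulting ODE $p' = c\,p^2$ for $p=G_s'$ and for $p=H_s'$ delivers $G_s'(\sigma) = r_s/(\sigma_0(s) - \sigma)$ and $H_s'(\theta) = r_s/(\theta_0(s) - \theta)$ with a \emph{common} $r_s = 1/c$. Regularity on $\sigma, \theta < 0$ forces $\sigma_0(s), \theta_0(s) \geq 0$, the hypothesis $s\notin\Lambda$ gives $r_s\neq 0$, and a final integration yields \eqref{GH}.

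\textbf{Part (ii).} To couple the indices $s$ and $s-1$, I use $\partial_\sigma v_s = \beta v_{s-1}$, obtained by differentiating under the expectation in $v_s(\sigma,\theta) = \E T^s e^{\theta T + \beta\sigma/T}$. Combined with $v_s = e^{H_s - G_s}\,y_s$, its $s-1$-analogue $v_{s-1} = e^{H_{s-1} - G_{s-1}}\,y_{s-1}$, and the consequence $y_{s+1} - G_s' y_s = \beta G_s' y_{s-1}/H_s'$ of \eqref{H'G'}, this collapses to
\[
\exp\bigl(H_{s-1}(\theta) - H_s(\theta) + G_s(\sigma) - G_{s-1}(\sigma)\bigr) = \frac{G_s'(\sigma)}{H_s'(\theta)}.
\]
Taking logarithms and separating variables shows that $G_s(\sigma) - G_{s-1}(\sigma) - \log|G_s'(\sigma)|$ and $H_s(\theta) - H_{s-1}(\theta) - \log|H_s'(\theta)|$ are both equal to a common constant. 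Substituting the logarithmic forms from part (i), the $\sigma$-side becomes $(1-r_s)\log(\sigma_0(s)-\sigma) + r_{s-1}\log(\sigma_0(s-1)-\sigma) + \mathrm{const}$. Requiring this to be constant in $\sigma$, I differentiate once and clear denominators: matching the coefficient of $\sigma$ in the resulting linear identity gives $r_s = r_{s-1} + 1$, and matching the constant term gives $r_{s-1}\bigl(\sigma_0(s) - \sigma_0(s-1)\bigr) = 0$. Since $r_{s-1}\neq 0$, $\sigma_0(s) = \sigma_0(s-1)$; the symmetric $\theta$-calculation yields $\theta_0(s) = \theta_0(s-1)$.

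\textbf{Main obstacle.} The delicate point in part (i) is excluding the degenerate case in which the common separation constant $c$ vanishes, forcing $G_s', H_s'$ to be mere nonzero constants so that the form $r_s/(\sigma_0(s)-\sigma)$ with \emph{finite} $\sigma_0(s)$ fails. This degenerate case corresponds to $L_s$ being an affine function of $(\sigma,\theta)$ modulo a function of a single linear combination, which via the cumulant-generating-function interpretation forces the pair $(B, 1/B)$ to have singular covariance, i.e.\ $1/B = aB+c'$ almost surely, so $B$ concentrates on at most two points. Since $B$ is assumed only non-Dirac, this two-point case must be excluded by a separate argument, most cleanly by running the same analysis in parallel for the pair $x_{-s}, u_{-s}$ via the companion relation in \eqref{TATRAS3} and checking incompatibility. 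Throughout one also needs to keep track of signs since $r_s$ may be negative, interpreting every $\log G_s'$ as $\log|G_s'|$ and using $\sigma_0(s)-\sigma>0$ to secure the correct branch.
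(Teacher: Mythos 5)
Your proof is correct and, for part (i), takes a genuinely different route from the paper's. You divide \eqref{H'G'} by $G_s'H_s'$ to obtain a first-order linear PDE for $L_s=\log y_s$, pass to the characteristic coordinates $(G_s(\sigma),H_s(\theta))$ so that $L_s=G_s+F_s(G_s+H_s)$, and then import the second-order identity $\partial_\sigma\partial_\theta y_s=\beta y_s$ to obtain the separated relation $G_s'(\sigma)H_s'(\theta)\Phi_s(K)=\beta$; taking logarithmic derivatives in $\sigma$ and $\theta$ and eliminating $\Phi_s'/\Phi_s$ yields $G_s''/G_s'^2=H_s''/H_s'^2=\mathrm{const}$ by separation of variables. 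The paper instead combines \eqref{YVSIGMA} with \eqref{XYUV1} to get \eqref{THEBEST1}, differentiates it in $\sigma$ (and its companion in $\theta$) to produce the recursions \eqref{THEBEST3} and \eqref{THEBEST4}, then applies the $s-1$ version of \eqref{THEBEST1} to show that $G_s'/G_{s-1}'=H_s'/H_{s-1}'$ is a constant, which forces both sides of \eqref{THEBEST3} and \eqref{THEBEST4} to be constant. Your route is shorter; the paper's has the advantage of producing, as a by-product, exactly the identity \eqref{THEBEST3} needed for part (ii). In part (ii) your relation $\exp\bigl(H_{s-1}-H_s+G_s-G_{s-1}\bigr)=G_s'/H_s'$ is simply the integrated form of \eqref{THEBEST3}--\eqref{THEBEST4}, and the coefficient matching that yields $r_s=r_{s-1}+1$ and $\sigma_0(s)=\sigma_0(s-1)$, $\theta_0(s)=\theta_0(s-1)$ is the same computation the paper carries out.

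Your ``main obstacle'' paragraph correctly flags a subtlety that the paper glosses over: the separated equation gives $G_s''/G_s'^2=H_s''/H_s'^2=c$ with $c$ a priori possibly zero, and $c=0$ would make $G_s',H_s'$ nonzero constants, in which case the representation $G_s'(\sigma)=r_s/(\sigma_0(s)-\sigma)$ with finite $\sigma_0(s)$ fails. Your observation that this degenerate case forces $\log y_s$ to be a function of a single linear combination of $(\sigma,\theta)$, hence that the Hessian of the tilted cumulant-generating function of $(B,\beta/B)$ is singular, hence that $aB^2+c'B-\beta=0$ almost surely and $B$ has at most a two-point support, is correct. The paper is silent here: it simply asserts that $r_s$ is a nonzero constant. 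Your proposed fix --- running the parallel analysis on $\widetilde x_{-s}=\widetilde u_{-s}$ and checking incompatibility --- is a plausible sketch but is not carried out, and it is not self-evident that it closes the gap; one would in fact need to verify that a two-atom $B$ together with a two-atom $A$ cannot make $S$ and $T$ independent, for instance by showing the four attained values of $(S,T)$ cannot form a product set. So this remains a gap shared with the paper; you at least name it explicitly and reduce it to the two-point-support case.
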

\begin{proof}\begin{enumerate}
	\item[Ad(i)]
	Since $s\not\in\Lambda$ we can divide by $H'_sG'_s$ in \eqref{H'G'} getting
	\begin{eqnarray}\label{XYUV1}1&=&\tfrac{1}{G'_s(\sigma)}\tfrac{y_{s+1}(\sigma,\theta)}{y_s(\sigma,\theta)}-\tfrac{\beta}{H'_s(\theta)}\tfrac{y_{s-1}(\sigma,\theta)}{y_s(\sigma,\theta)}\\\label{XYUV2}1&=&\frac{1}{H'_s(\theta)}\tfrac{v_{s+1}(\sigma,\theta)}{v_s(\sigma,\theta)}-\tfrac{\beta}{G'_s(\sigma)}\frac{v_{s-1}(\sigma,\theta)}{v_s(\sigma,\theta)}.
	\end{eqnarray} 
	
	 Comparing \eqref{YVSIGMA} and \eqref{XYUV1} we obtain
	
	\begin{equation}\label{THEBEST1}\frac{1}{G'_s(\sigma)}\frac{v_{s-1}(\sigma,\theta)}{v_s(\sigma,\theta)}=\frac{1}{H'_s(\theta)}\frac{y_{s-1}(\sigma,\theta)}{y_s(\sigma,\theta)}.\end{equation}
	We take derivative of \eqref{THEBEST1} with respect to $\sigma$
	
	\begin{equation}\label{THEBEST2}\frac{v_{s-1}(\sigma,\theta)}{v_s(\sigma,\theta)}\frac{G_s''(\sigma)}{G'^2_s(\sigma)}-\frac{1}{G'_s(\sigma)}\,\frac{\partial}{\partial \sigma}\left(\frac{v_{s-1}(\sigma,\theta)}{v_s(\sigma,\theta)}\right)+\frac{1}{H'_s(\theta)}(1-\widetilde{y}_s(\theta,\sigma))=0.\end{equation}
	Since
	$$\frac{\partial}{\partial \sigma}\left(\frac{v_{s-1}}{v_s}\right)=-\beta\left(\frac{v_{s-1}}{v_s}\right)^2(1-\widetilde{v}_{s-1}),$$
	using this last equality and \eqref{THEBEST1}, after cancelling by 
	$\frac{v_{s-1}}{v_s}$, we get 
	
	$$
	\frac{G''_s(\sigma)}{(G'_s(\sigma))^2}+\frac{1}{G_s'(\sigma)}\left(\beta\frac{v_{s-1}(\sigma,\theta)}{v_s(\sigma,\theta)}-\frac{y_{s+1}(\sigma,\theta)}{y_s(\sigma,\theta)}\right)-\frac{1}{G'_s(\sigma)}\left(\beta\frac{v_{s-2}(\sigma,\theta)}{v_{s-1}(\sigma,\theta)}-\frac{y_{s}(\sigma,\theta)}{y_{s-1}(\sigma,\theta)}\right)=0$$
	Finally, using \eqref{YVSIGMA} we obtain 
	
	\begin{equation} \label{THEBEST3}\frac{G_s''}{G_s'^2}-\frac{G'_s-G'_{s-1}}{G_s'}=0\end{equation}
	A similar calculation starting with comparison \eqref{YVTHETA} and \eqref{XYUV2}, and taking the derivative with respect to $\theta$ gives the same thing with $H$:
	\begin{equation} \label{THEBEST4}\frac{H_s''}{H_s'^2}-\frac{H'_s-H'_{s-1}}{H_s'}=0\end{equation} Now reactivating  \eqref{THEBEST1} we get
	and 
	$$\frac{G'_s(\sigma)}{H'_s(\theta)}=\frac{v_{s-1}(\sigma,\theta)}{v_s(\sigma,\theta)}\frac{y_s(\sigma,\theta)}{y_{s-1}(\sigma,\theta)}\quad \mbox{and}\quad
	\frac{H'_{s-1}(\theta)}{G'_{s-1}(\sigma)}=\frac{v_{s-1}(\sigma,\theta)}{v_{s-2}(\sigma,\theta)}\frac{y_{s-2}(\sigma,\theta)}{y_{s-1}(\sigma,\theta)}.
	$$
	Consequently, 
	$$
	\frac{G'_s(\sigma)}{H'_s(\theta)}\frac{H'_{s-1}(\theta)}{G'_{s-1}(\sigma)}=\frac{\widetilde{y}_{s-1}(\sigma,\theta)}{\widetilde{v}_{s-1}(\sigma,\theta)}=1.$$ Combining this result with \eqref{THEBEST3} and \eqref{THEBEST4} we land on a separation of variables equation
	$$\frac{G_s''(\sigma)}{G_s'^2(\sigma)}=\frac{H_s''(\theta)}{H_s'^2(\theta)}=\frac{1}{r_s}$$ where $r_s$ is a non-zero constant with respect to $\sigma$ and $\theta.$ Integrating these two simple differential equations we arrive at \eqref{DERIV}.
	
	The formula \eqref{GH} is an immediate consequence of \eqref{DERIV}.
	
	\item[Ad(ii)]  From \eqref{THEBEST3}, in view of \eqref{DERIV}, we get 
	$$
	\tfrac{1}{r_s}=1-\tfrac{r_{s-1}}{r_s}\,\tfrac{\sigma_0(s)-\sigma}{\sigma_0(s-1)-\sigma},\quad \sigma<0.
	$$
	Therefore,
	$$
	(r_s-1-r_{s-1})\sigma+r_{s-1}\sigma_0(s)-(r_s-1)\sigma_0(s-1)=0.
	$$
	Thus, looking at the coefficient of $\sigma$ we get $r_s=r_{s-1}+1$. Thus, the constant term gives $\sigma_0(s)=\sigma_0(s-1)$. 	Similarly, using \eqref{THEBEST4} we get $\theta_0(s)=\theta_0(s-1)$.
	\end{enumerate}
	
\end{proof}


\subsection {Computing $y_s$ and $v_s$} 

 Let us introduce an auxiliary function
 \begin{equation}
 \label{yyGH}
 \overline{y}_s(\sigma,\theta)=\log y_s(\sigma,\theta)-\d G_s(\sigma)+\d H_s(\theta).
 \end{equation}
Thus, in view of \eqref{GH} we get
 \begin{equation}
 \label{EyyGH}y_s(\sigma,\theta)=C(s)e^{\overline y_s(\sigma,\theta)}\left(\tfrac{\theta_0-\theta}{\sigma_0-\sigma}\right)^{r_s/2}.
 \end{equation}
 Rewrite now \eqref{XYUV1} as
\begin{equation}
\label{1-G'}
1-\frac{1}{G'_s(\sigma)}\frac{\partial \log y_s(\sigma,\theta)}{\partial \sigma}+\frac{1}{H'_s(\theta)}\frac{\partial \log y_s(\sigma,\theta)}{\partial \theta}=0.
\end{equation}
Note that \eqref{EyyGH} yields
$$
\frac{\partial \log y_s(\sigma,\theta)}{\partial \sigma}=\frac{\partial  \overline y_s(\sigma,\theta)}{\partial \sigma}+\tfrac{r_s}{2(\sigma_0-\sigma)}\quad\mbox{and}\quad \frac{\partial \log y_s(\sigma,\theta)}{\partial \theta}=\frac{\partial  \overline y_s(\sigma,\theta)}{\partial \theta}-\tfrac{r_s}{2(\theta_0-\theta)}.
$$
Plugging these two expressions into \eqref{1-G'}, in view of \eqref{DERIV},   we get
\begin{equation}
\label{pde}
(\theta_0-\theta)\frac{\partial \overline{y}_s}{\partial \theta}-(\sigma_0-\sigma)\frac{\partial \overline{y}_s}{\partial \sigma}=0.
\end{equation}
For 
\begin{equation}
\label{pq}
p=\sqrt{(\sigma_0-\sigma)(\theta_0-\theta)}\quad\mbox{ and }\quad q=\sqrt{\sigma_0-\sigma}/\sqrt{\theta_0-\theta},
\end{equation}
we introduce temporarily a function $h_s$ defined by $h_s(p,q)=\overline y_s(\theta,\,\sigma)$.  Since $$h_s(p,q)=\overline y_s\left(\theta_0-\tfrac{p}{q} ,\,\sigma_0-pq\right)$$ we arrive at
$$\frac{\partial h_s(p,q)}{\partial q}=\frac{p}{q^2}\frac{\partial \overline{y}_s}{\partial \theta}-p\frac{\partial \overline{y}_s}{\partial \sigma}=\frac{1}{q}\left(  (\theta_0-\theta)\frac{\partial \overline{y}_s}{\partial \theta}-(\sigma_0-\sigma)\frac{\partial \overline{y}_s}{\partial \sigma}\right).$$ Thus, \eqref{pde} implies that $h_s(p,q)=\ell_s(2p)$ for some function $\ell_s$. Consequently, 
\begin{equation}\label{OUF}\overline y_s(\sigma,\theta)=\ell_s\left(2\sqrt{(\sigma_0-\sigma)(\theta_0-\theta)}\right).\end{equation}
Denote $f_s=C(s)e^{\ell_s}$ with $C(s)$ from \eqref{EyyGH}. Referring to \eqref{OUF} and \eqref{EyyGH} we get
\begin{equation}
\label{ysfs}
y_s(\sigma,\theta)=\left(\frac{\theta_0-\theta}{\sigma_0-\sigma}\right)^{r_s/2}
f_s\left(2\sqrt{(\theta_0-\theta )(\sigma_0-\sigma)}\right)=q^{-r_s}f_s(2p).
\end{equation}
To identify $f_s$ we will derive an ordinary second order differential equation satisfied by this function.  We first show that
\begin{equation}\label{DIFFATLAST}
\beta y_s(\sigma,\theta)-\frac{r_{s-1}}{\sigma_0-\sigma}\frac{\partial y_s(\sigma,\theta)}{\partial \theta}-\frac{\theta_0-\theta}{\sigma_0-\sigma}\frac{\partial ^2y_s(\sigma,\theta)}{\partial \theta^2}=0.\end{equation}
To this end we use \eqref{DERIV} and rewrite \eqref{XYUV1}  with $s$ changed into $s-1$ as
$$
r_{s-1}y_{s-1}(\sigma,\theta)=(\sigma_0-\sigma)\,y_s(\sigma,\theta)-\beta(\theta_0-\theta)y_{s-2}(\sigma,\theta).
$$
Now, \eqref{DIFFATLAST} follows from the last equality since $\tfrac{\partial y_s}{\partial \theta}=\beta y_{s-1}$ and $\tfrac{\partial^2 y_s}{\partial \theta^2}=\beta^2 y_{s-2}.
$
Then we insert $y_s$ as obtained in \eqref{ysfs} into \eqref{DIFFATLAST}. Recalling \eqref{pq} we note that
\begin{equation}
\label{pqd}
\tfrac1{q}\,\tfrac{\partial q}{\partial \theta}=\tfrac{1}{2(\theta_0-\theta)}\quad\mbox{and}\quad \tfrac1{p}\,\tfrac{\partial p}{\partial \theta}=-\tfrac{1}{2(\theta_0-\theta)}.
\end{equation}
Using \eqref{pqd}, after careful calculation, we get
$$
\tfrac{\partial y_s(\sigma,\theta)}{\partial \theta}=-\tfrac{q^{-r_s}}{\theta_0-\theta}\left[\tfrac{r_s}{2}f_s(2p)+f_s'(2p)\right]
$$ 
and
$$
\tfrac{\partial^2 y_s(\sigma,\theta)}{\partial \theta^2}=\tfrac{q^{-r_s}}{(\theta_0-\theta)^2}\left[-\tfrac{r_s}{2}\left(1-\tfrac{r_s}{2}\right)f_s(2p)+\left(r_s-\tfrac12\right)f_s'(2p)+p^2f''_s\right].
$$ 
Plugging these two expressions into \eqref{DIFFATLAST} we get
$$
f_s''(2p)-\left(r_{s-1}-r_s+\tfrac12\right)\,\tfrac{f_s'(2p)}{p}-\left(\beta p^2+\tfrac{r_{s-1}r_s}{2}+\tfrac{r_s}{2}-\tfrac{r_s^2}{4}\right)\tfrac{f_s(2p)}{p^2}.
$$
Using the fact that $r_{s-1}=r_s-1$ the above equation can be rewritten as
$$
f_s''(2p)+\tfrac{f_s'}{2p}-\left(\beta p^2+\tfrac{r_s^2}{4}\right)\,\tfrac{f(2p)}{p^2}=0.
$$
Denoting $t=2p$ we get
$$
f_s''(t)+\tfrac{f_s'(t)}{t}-(\beta t^2+r_s^2)\tfrac{f_s(t)}{t^2}=0.
$$
Thus, for a function $g_s$ defined by $g_s(z)=f_s(z/\sqrt{\beta})$ we obtain the classical Bessel equation (see e.g. Watson (1966), p.77, (1))
$$
g_s''(z)+\tfrac{g_s'(z)}{z}-(z^2+r_s^2)\tfrac{g_s(z)}{z^2}=0.
$$
Consequently, $g_s=aI_{r_s}+bK_{r_s}$, where $I_{r_s}$ and $K_{r_s}$ are the modified Bessel functions of the first and third type, defined by
$$
I_{\nu}(z)=\sum_{m=0}^{\infty}\,\tfrac{(z/2)^{\nu+2m}}{m!\Gamma(\nu+m+1)}, 
$$
e.g.  Watson (1966), p.77 (2), and
\begin{equation}
\label{besse}
K_{\nu}(z)=\left(\tfrac{z}{2}\right)^\nu\,\int_0^{\infty}\,\tau^{-\nu-1}\,e^{-(\tau+\tfrac{z^2}{4\tau})}\,d\tau,
\end{equation}
e.g.  Watson (1966), p. 183 (15).
We note that $g_s$ is  bounded at infinity. Indeed, it suffices to show that $f_s$ is bounded at infinity. Note that for a fixed $s$ the function $y_s$ is bounded on $(-\infty,-\eps)^2$ for a fixed $\eps>0$. Inserting $\theta_0-\theta=\sigma_0-\sigma=t/2$ in \eqref{ysfs}  for  $(\sigma,\theta)\in(-\infty,-\eps)^2$  we get $y_s(\theta,\sigma)=f_s(t)$. Thus $f_s$ is bounded at infinity.
But $I_\nu(z)\rightarrow_{z\to\infty}\infty$. Therefore, $g_s=bK_{r_s}$ for some real $b$ depending on $s$. Consequently, for $\sigma<0$ and $\theta<0$ we have
\begin{equation}
\label{PAINY}
y_s(\sigma,\theta)=q^{-r_s}g_s(2\sqrt{\beta}p)=b(s)\left(\tfrac{\theta_0-\theta}{\sigma_0-\sigma}\right)^{r_s/2}\,K_{r_s}\left(2\sqrt{\beta(\theta_0-\theta)(\sigma_0-\sigma)}\right).
\end{equation}
Repeating for $v_s$ the argument we used for $y_s$ while using \eqref{XYUV2} instead of \eqref{XYUV1} requires only flipping $\theta_0-\theta$ with $\sigma_0-\sigma$. Therefore,
\begin{equation}
\label{PAINV}
v_s(\sigma,\theta)=t(s)\left(\tfrac{\sigma_0-\sigma}{\theta_0-\theta}\right)^{r_s/2}\,K_{r_s}\left(2\sqrt{\beta(\theta_0-\theta)(\sigma_0-\sigma)}\right).
\end{equation}
Note that in both formulas \eqref{PAINY} and \eqref{PAINV} we have $\theta_0\ge 0$ and $\sigma_0\ge 0$.

\subsection{Conclusion of the proof}

Let us first show that $\theta_0>0$ and $\sigma_0>0.$ For this consider a possibly unbounded positive measure
$$
r_B(dx)=x^{r_s-s-1}\,e^{-\sigma_0 x-\tfrac{\beta\theta_0}{x}}\,I_{(0,\infty)}(x)\,dx
$$
Then for $\sigma<0$ and $\theta<0$ we have
$$
\int_0^{\infty}\,x^se^{\sigma s+\tfrac{\beta\theta_0}{s}}\,r_{B}(dx)=\left(\tfrac{\beta(\theta_0-\theta)}{\sigma_0-\sigma}\right)^{r_s/2}\,K_{r_s}(2\sqrt{\beta(\sigma_0-\sigma)(\theta_0-\theta)})
= \tfrac{\beta^{r_s/2}}{b(s)}y_s(\sigma,\theta),
$$
where the last equality follows by \eqref{PAINY}. Consequently, we get
$$
\int_0^{\infty}\,x^se^{\sigma s+\tfrac{\beta\theta_0}{s}}\,r_{B}(dx)=\tfrac{\beta^{r_s/2}}{b(s)}\,\int_0^{\infty}\,\,x^s\,e^{\sigma x+\tfrac{\beta\theta}{x}}\,\P_B(dx).
$$
Therefore,
\begin{equation}
\label{PB}
\P_B(dx)\propto x^{r_s-s-1}\,e^{-\sigma_0 x-\tfrac{\beta\theta_0}{x}}\,I_{(0,\infty)}(x)\,dx
\end{equation}
Similarly, using \eqref{PAINV}, we see that 
\begin{equation}
\label{PT}
\P_T(dx)\propto x^{r_s-s-1}\,e^{-\theta_0 x-\tfrac{\beta\theta_0}{x}}\,I_{(0,\infty)}(x)\,dx.
\end{equation}
Note that \eqref{PB} or \eqref{PT} imply that $\theta_0=\sigma_0=0$ is impossible. Note also that when $\theta_0>0$ and $\sigma_0=0$, then, according to \eqref{PB}, the measure $\P_B$ is finite only if $r_s-s<0$. But then the measure at the right hand side of \eqref{PT} is not finite, which is impossible. By the similar argument we exclude the case $\sigma_0>0$ and $\theta_0=0$. Thus, both these situations are not possible. Finally, we conclude that $\sigma_0>0$, $\theta_0>0$ and $B\sim\mathrm{GIG}(\lambda,\sigma_0,\beta\theta_0)$, $T\sim\mathrm{GIG}(\lambda,\theta_0,\beta\sigma_0)$, with $\lambda=r_s-s$.

To compute the distributions of $A$ and $S$ we follow the arguments we used for $B$ and $T$, but we start with the second equality in \eqref{TATRAS3}, instead of the first one, which we used while computing $v_s$ and $y_s$. This equality yields $\widetilde u_{-s}=\widetilde x_{-s}$ for all $\sigma<0$ and $\theta<0$. As in the case $\widetilde v_s=\widetilde y_s$ we obtain  $A\sim\mathrm{GIG}(\lambda',\,\alpha\sigma'_0,\,\theta'_0)$ and $S\sim\mathrm{GIG}(\lambda',\alpha\theta'_0,\sigma'_0)$ for some $\sigma_0',\theta_0'>0$ and $\lambda'\in\R$. 

Consequently, there exist constants $C_i$, $i=1,2,3,4$, which may depend on $s$ such that
\begin{align*}
&x_{-s}=C_1\left(\tfrac{\theta_0'-\theta}{\sigma_0'-\sigma}\right)^{(\lambda'-s)/2}\,K_{\lambda'-s}\left(2\sqrt{\alpha(\sigma'_0-\sigma)(\theta'_0-\theta)}\right),\\
&y_{s}=C_2\left(\tfrac{\theta_0-\theta}{\sigma_0-\sigma}\right)^{(\lambda+s)/2}\,K_{\lambda+s}\left(2\sqrt{\beta(\sigma_0-\sigma)(\theta_0-\theta)}\right),\\
&u_{-s}=C_3\left(\tfrac{\sigma_0'-\sigma}{\theta_0'-\theta}\right)^{(\lambda'-s)/2}\,K_{\lambda'-s}\left(2\sqrt{\alpha(\sigma'_0-\sigma)(\theta'_0-\theta)}\right),\\
&v_{s}=C_4\left(\tfrac{\sigma_0-\sigma}{\theta_0-\theta}\right)^{(\lambda+s)/2}\,K_{\lambda+s}\left(2\sqrt{\beta(\sigma_0-\sigma)(\theta_0-\theta)}\right).
\end{align*}
Thus \eqref{TRICK2} implies
$$C_1C_2\,\left(\tfrac{\theta_0'-\theta}{\sigma_0'-\sigma}\right)^{(\lambda'-s)/2}\,\left(\tfrac{\theta_0-\theta}{\sigma_0-\sigma}\right)^{(\lambda+s)/2}=C_3C_4\,\left(\tfrac{\sigma_0'-\sigma}{\theta_0'-\theta}\right)^{(\lambda'-s)/2}\,\left(\tfrac{\sigma_0-\sigma}{\theta_0-\theta}\right)^{(\lambda+s)/2},
$$
for all $\sigma<0$, $\theta<0$. Consequently, $\lambda+s=-(\lambda'-s)$, i.e. $\lambda=-\lambda'$. Moreover, $\theta_0=\theta'_0:=c_2$ and $\sigma_0=\sigma'_0:=c_1$.  Thus the result follows.

\section{GMY property for GIG matrices}
Let $\Omega$ be the Euclidean space of symmetric $r\times r$ matrices with real entries and the inner product defined by $\<x,y\>=\tr(xy)$, $x,y\in \Omega$. Denote by $\Omega_+$ the cone of symmetric positive definite $r\times r$ matrices. We say that an $\Omega_+$-valued random matrix $X$ has a matrix GIG law, $\mathrm{MGIG}(p,a,b)$, for $p\in\R$ and $a,b\in\Omega_+$ if it has the density with respect to the Lebesgue measure $dx$ in $\Omega$ of the form
$$
\mu_{p,a,b}(x)=\tfrac{1}{K_p(a,b)}\,(\det\,x)^{p-\tfrac{r+1}{2}}\,\exp\left(-\tfrac{\tr(ax)+\tr(bx^{-1})}{2}\right)\,I_{\Omega_+}(x),
$$
see e.g. Sec. 2 of Letac and Weso\l owski (2000).

For $\alpha,\beta\ge 0$ we recall the map $F_{dK}^{(\alpha,\beta)}$ on $\Omega_+\times\Omega_+$ defined in the introduction by the formula
\bel{MF}
F_{dK}^{(\alpha,\beta)}(x,y)=\left(y(I+\alpha xy)^{-1}(I+\beta xy),\,x(I+\beta yx)^{-1}(I+\alpha yx)\right),
\ee
where $I$ denotes the identity matrix. Note that for $\alpha=0$, $\beta>0$ and $\alpha>0$, $\beta=0$ the problem we discuss reduces to the matrix variate version of the original Matsumoto-Yor property given in Letac and Weso\l owski (2000).  Moreover, $F_{dK}^{(\alpha,\alpha)}$ is just the identity. Therefore, we are not interested in these cases in the sequel.

\begin{proposition}
	For every distinct $\alpha,\beta>0$ the map $F_{dK}^{(\alpha,\beta)}$ is a differentiable involution on $\Omega_+\times\Omega_+$. Moreover, its Jacobian equals 1.
\end{proposition}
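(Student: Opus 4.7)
The plan is to verify four things: $F$ is smooth, $F(\Omega_+^2)\subset\Omega_+^2$, $F\circ F=\mathrm{id}$, and $|\det JF|=1$. Smoothness and the existence of all inverses in \eqref{MF} are immediate because $I+\alpha xy$ has the same positive spectrum as $I+\alpha y^{1/2}xy^{1/2}\in\Omega_+$. Symmetry of $F_1$ follows from the rewriting $F_1=(y^{-1}+\alpha x)^{-1}(y^{-1}+\beta x)y$ together with the identity $(y^{-1}+\alpha x)y(y^{-1}+\beta x)=(y^{-1}+\beta x)y(y^{-1}+\alpha x)=y^{-1}+(\alpha+\beta)x+\alpha\beta\,xyx$. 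Setting $u:=y^{1/2}xy^{1/2}\in\Omega_+$ one also obtains $F_1=y^{1/2}(I+\alpha u)^{-1}(I+\beta u)y^{1/2}$, so $F_1\in\Omega_+$ because the middle factor is a positive commuting function of the positive matrix $u$. The involution is proved via the pivotal identity $F_1(x,y)F_2(x,y)=yx$: the identity $(I+\beta xy)x=x(I+\beta yx)$ telescopes the $\beta$-factors, and the analogous identity for $\alpha$ leaves exactly $yx$; substituting $x'y'=yx$ and $y'x'=xy$ back into \eqref{MF} and telescoping once more yields $F(F(x,y))=(x,y)$.

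For $|\det JF|=1$, the key observation is the $GL_r(\R)$-covariance
\[
F(MxM^T,M^{-T}yM^{-1})=\bigl(M^{-T}F_1(x,y)M^{-1},\,MF_2(x,y)M^T\bigr),
\]
readily verified from \eqref{MF}. Both the source and target transformations have Jacobian $|\det M|^{r+1}|\det M|^{-(r+1)}=1$ on $\Omega_+^2$, so $|\det JF|$ is constant on orbits. Every $(x,y)\in\Omega_+^2$ lies in the orbit of some $(I,D)$ with $D=\mathrm{diag}(d_1,\ldots,d_r)$: take $M=x^{-1/2}$, then conjugate by an orthogonal matrix that diagonalizes $x^{1/2}yx^{1/2}$. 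Hence it suffices to evaluate at such a diagonal point.

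At $(I,D)$ both $A:=(y^{-1}+\alpha x)^{-1}$ and $B:=(x^{-1}+\beta y)^{-1}$ are diagonal, so differentiating
\[
F_1=\tfrac{\beta}{\alpha}y-\tfrac{\beta-\alpha}{\alpha}A,\qquad F_2=\tfrac{\alpha}{\beta}x-\tfrac{\alpha-\beta}{\beta}B
\]
reveals that each of the four blocks $P=dF_1/dx$, $Q=dF_1/dy$, $R=dF_2/dx$, $S=dF_2/dy$ acts on the standard symmetric basis $\{E_{ij}:i\le j\}$ of $\Omega$ by scalar multiplication. The Jacobian thus decomposes into $r(r+1)/2$ independent $2\times 2$ blocks $M_{ij}$, and the statement reduces to showing $\det M_{ij}=-1$ for each pair. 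After substitution, writing $s=d_i+d_j$ and $t=d_id_j$, this collapses to the two-variable polynomial identity
\[
(1+\alpha s+\alpha^2 t)(1+\beta s+\beta^2 t)=(1+\beta s+\alpha\beta t)(1+\alpha s+\alpha\beta t)+(\beta-\alpha)^2 t,
\]
verified by expansion (both sides equal $1+(\alpha+\beta)s+(\alpha^2+\beta^2)t+\alpha\beta s^2+\alpha\beta(\alpha+\beta)st+\alpha^2\beta^2 t^2$); specializing to $i=j$ with $r=1$ returns the cancellation $(\alpha\beta t^2+2\beta t+1)(\alpha\beta t^2+2\alpha t+1)+(\beta-\alpha)^2 t^2=((1+\alpha t)(1+\beta t))^2$ that already produces the Jacobian $-1$ in the scalar case of \eqref{fdk}. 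Consequently $\det JF=(-1)^{r(r+1)/2}$, so $|\det JF|=1$. The only genuinely computational step is the displayed two-variable identity; everything else is either the $GL_r$-reduction or the telescoping pattern already exploited for the involution.
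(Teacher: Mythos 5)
Your proof is correct, and for the Jacobian it takes a genuinely different route from the paper's. The involution part (symmetry of $F_1,F_2$, positivity via the conjugation $F_1=y^{1/2}(I+\alpha u)^{-1}(I+\beta u)y^{1/2}$, and the pivotal identity $F_1F_2=yx$) is essentially the same argument as in the paper. For the Jacobian, however, the paper conjugates $F$ to the auxiliary map $\psi$ via $(x,y)\mapsto(x,y^{-1})$ and then evaluates $J_\psi$ at a \emph{generic} point by manipulating the $2\times 2$ block operator matrix built from the quadratic representations $\P_a$, using Cholesky-type elimination and the identity $\mathrm{Det}\,\P_a=(\det a)^{r+1}$; this is an operator-algebraic computation. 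You instead exploit the $GL_r$-covariance $F(MxM^T,M^{-T}yM^{-1})=(M^{-T}F_1M^{-1},MF_2M^T)$, whose source and target Jacobians both equal $1$, to conclude that $J_F$ is constant on orbits, reduce to the diagonal point $(I,D)$, and observe that there the differential decouples into $r(r+1)/2$ scalar $2\times 2$ blocks, each with determinant $-1$ by a short polynomial identity in $s=d_i+d_j,\;t=d_id_j$. This buys you a much more elementary final step (one polynomial identity rather than operator manipulations), at the cost of the orbit-reduction observation; conversely the paper's calculation is self-contained at a single arbitrary point and does not require identifying a convenient group action. Both give $|J_F|\equiv 1$, which is what the proposition asserts (the paper's ``equals $1$'' is to be read up to sign; you correctly note the sign is $(-1)^{r(r+1)/2}$). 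The only small slip worth flagging is harmless: the matrix you diagonalize by an orthogonal conjugation after applying $M=x^{-1/2}$ is $x^{1/2}yx^{1/2}$, exactly as you wrote, so the reduction is fine; and your rearrangements $F_1=\tfrac{\beta}{\alpha}y-\tfrac{\beta-\alpha}{\alpha}(y^{-1}+\alpha x)^{-1}$ and $F_2=\tfrac{\alpha}{\beta}x-\tfrac{\alpha-\beta}{\beta}(x^{-1}+\beta y)^{-1}$ check out, as does the two-variable identity
\[
(1+\alpha s+\alpha^2 t)(1+\beta s+\beta^2 t)=(1+\beta s+\alpha\beta t)(1+\alpha s+\alpha\beta t)+(\beta-\alpha)^2 t.
\]
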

\begin{proof} For  distinct $\alpha,\beta>0$ we denote $F:=F_{dK}^{(\alpha,\beta)}$. For $x,y\in\Omega_+$ set $(u,v)=F(x,y)$. Note that
	$$y(I+\alpha xy)^{-1}=(I+ \alpha yx)^{-1}y\qquad\mbox{and}\qquad y(I+\beta xy)=(I+\beta yx)y.
	$$
	Combining these two identities we get
	$$
	u=y(I+\alpha xy)^{-1}(I+\beta xy)=(I+\alpha yx)^{-1}y(I+\beta xy)=(I+\alpha yx)^{-1}(I+\beta yx)y.
	$$
	But the right hand side above equals $u^T$, i.e. $u$ is symmetric. Similarly, we conclude that $v$ is symmetric. 
	Since $u$ and $v$ are products of positive definite matrices, they are also positive definite. 
	Consequently, $F(\Omega_+\times\Omega_+)\subset \Omega_+\times\Omega_+$.
	Note that $v^T=(I+\alpha xy)(I+\beta xy)^{-1}x=(I+\beta xy)^{-1}(I+\alpha xy)x$. Since $v=v^T$ we get
	\bel{uvyx}
	uv=yx.
	\ee
	Plugging \eqref{uvyx} or its transpose to definitions of $u$ and $v$ we get
	$$
	u=y(I+\alpha vu)^{-1}(I+\beta vu)\qquad\mbox{and}\qquad v=x(I+\beta uv)^{-1}(I+\alpha uv)
	$$
	whence
	$$
	x=v(I+\alpha uv)^{-1}(I+\beta uv)\qquad\mbox{and}\qquad y=u(I+\beta vu)^{-1}(I+\alpha vu).
	$$
	Thus $F$ is an involution, i.e. $F=F^{-1}$. Differentiability on $\Omega_+\times\Omega_+$ is clear.
	
	To compute the Jacobian $J_F$ of $F$ we first note that $F=\phi^{-1}\circ\psi\circ \phi$, where $\phi(x,y)=(x,y^{-1})$ and 
	\bel{psi}\psi(x,y)=\left(y^{-1}(y+\beta x)(y+\alpha x)^{-1},\;x^{-1}(y+\beta x)(y+\alpha x)^{-1}\right)=:(u',v').\ee
	Thus $$J_F(x,y)=J_{\phi^{-1}}(x,y)J_{\psi}(x,y^{-1})J_{\phi}(u',v').$$
	Note that
	$$u'=\tfrac{\beta}{\alpha}\,y^{-1}+\tfrac{\alpha-\beta}{\alpha}(y+\alpha x)^{-1}\qquad \mbox{and}\qquad v'=x^{-1}-(\alpha-\beta)(y+\alpha x)^{-1}.$$
	Recall that the derivative  $D_x(x^{-1})$ is $-\P_{x^{-1}}$, where $\P_a$ is the  endomorphism on $\Omega$ defined by $\P_a(h)=aha$, $h\in\Omega$. Consequently, the derivatives $D_x$ and $D_y$ of  $u'$  are
	$$
	D_x(u')=-(\alpha-\beta)\P_{(y+\alpha x)^{-1}}\qquad \mbox{and}\qquad D_y(u')=-\tfrac{\beta}{\alpha}\P_{y^{-1}}-\tfrac{\alpha-\beta}{\alpha}\,\P_{(y+\alpha x)^{-1}}.
	$$
	Similarly, the derivatives  of  $v'$ are
	$$
	D_x(v')=-\P_{x^{-1}}+(\alpha-\beta)\alpha\P_{(y+\alpha x)^{-1}}\qquad \mbox{and}\qquad D_y(v')=(\alpha-\beta)\P_{(y+\alpha x)^{-1}}.
	$$
	For clarity we write $\det x$ for the determinant of $x\in\Omega$ and $\mathrm{Det}\,M$ for the determinant of the endomorphism $M$ on $\Omega$ or $\Omega^2$. For example, in the sequel we will need the formula 
	\begin{equation}
	\label{Det}
	\mathrm{Det}\,\P_x=(\det\,x)^{r+1},
	\end{equation} see e.g. Faraut and Koranyi (1994), p. 52. Thus
	\begin{align*}
	J_{\psi}(x,y)&=\mathrm{Det}\,\left[\begin{array}{cc} -(\alpha-\beta)\P_{(y+\alpha x)^{-1}} & -\tfrac{\beta}{\alpha}\P_{y^{-1}}-\tfrac{\alpha-\beta}{\alpha}\,\P_{(y+\alpha x)^{-1}}\\
	\, & \, \\ 
	-\P_{x^{-1}}+(\alpha-\beta)\alpha\P_{(y+\alpha x)^{-1}} & (\alpha-\beta)\P_{(y+\alpha x)^{-1}}\end{array}\right]\\\, & \, \\
	&=\mathrm{Det}\,\left[\begin{array}{cc} -(\alpha-\beta)\P_{(y+\alpha x)^{-1}} \ \ & \ \ -\tfrac{\beta}{\alpha}\P_{y^{-1}}-\tfrac{\alpha-\beta}{\alpha}\,\P_{(y+\alpha x)^{-1}}\\ \, & \, \\
	-\P_{x^{-1}} \ \ & \ \ -\beta\P_{y^{-1}}\end{array}\right].
	\end{align*}
	Using the Cholesky decomposition we thus get
	\begin{align*}
	J_{\psi}(x,y)=&\mathrm{Det}\,\left\{(\alpha-\beta)\P_{(y+\alpha x)^{-1}}\right\}\\
	&\times \mathrm{Det}\left\{-\beta\P_{y^{-1}}+\tfrac{1}{\alpha(\alpha-\beta)}\,\P_{x^{-1}}\left(\P_{(y+\alpha x)^{-1}}\right)^{-1} \left(\beta\P_{y^{-1}}+(\alpha-\beta)\,\P_{(y+\alpha x)^{-1}}\right)\right\}.
	\end{align*}
	Using the fact that $\P_{a^{-1}}=\P_a^{-1}$ we see that the operator under the second determinant has the form
	\begin{align*}
	&-\beta\P_{y^{-1}}+\tfrac{1}{\alpha(\alpha-\beta)}\,\P_{x^{-1}}\P_{y+\alpha x} \left(\beta\P_{y^{-1}}+(\alpha-\beta)\,\P_{y+\alpha x}^{-1}\right)\\
	=&-\beta\P_{y^{-1}}+\tfrac{\beta}{\alpha(\alpha-\beta)}\,\P_{x^{-1}}\P_{y+\alpha x} \P_{y^{-1}}+\tfrac{1}{\alpha}\P_{x^{-1}}\\
	=&\tfrac{1}{\alpha(\alpha-\beta)}\,\P_{x^{-1}}\left[-\alpha\beta(\alpha-\beta)\P_x+\beta\,\P_{y+\alpha x} +(\alpha-\beta)\P_y\right]\P_{y^{-1}}
	\end{align*}
	For $x,y\in\Omega$ denote by $\mathbb L_{x,y}$ the endomorphism of $\Omega$ defined by $\mathbb L_{x,y}(h)=xhy+yhx$, $h\in\Omega$. Then
	$$
	\P_{y+\alpha x}=\P_y+\alpha \mathbb L_{x,y}+\alpha^2\P_x
	$$
	whence
	$$
	-\alpha\beta(\alpha-\beta)\P_x+\beta\,\P_{y+\alpha x} +(\alpha-\beta)\P_y=-\alpha\beta(\alpha-\beta)\P_x+\beta(\alpha^2\P_x+\alpha \mathbb L_{x,y}+\P_y)+(\alpha-\beta)\P_y=
	\alpha\P_{y+\beta x}.
	$$
	Summing up, in view of \eqref{Det}, we obtain
	$$
	J_{\psi}(x,y)=\tfrac{\mathrm{Det}\,\P_{y+\beta x}}{\mathrm{Det}\,\P_{y+\alpha x}\;\mathrm{Det}\,\P_x\;\mathrm{Det}\,\P_y}=\left(\tfrac{\det(y+\beta x)}{\det(y+\alpha x)\det(x)\det(y)}\right)^{r+1}.
	$$
	Thus 
	$$
	J_{\psi}(x,y^{-1})=\left(\tfrac{\det(I+\beta xy)\det(y)}{\det(I+\alpha xy)\det(x)}\right)^{r+1}
	$$
	Note that $J_{\phi}(x,y)=\mathrm{Det}\left\{-\P_{y^{-1}}\right\}=\tfrac{(-1)^r}{(\det\,y)^{r+1}}$. Therefore,
	$$
	J_F(x,y)=\tfrac{1}{(\det\,y)^{r+1}}\,\left(\tfrac{\det(I+\beta xy)\det(y)}{\det(I+\alpha xy)\det(x)}\right)^{r+1}\tfrac{1}{(\det(v'(x,y^{-1})))^{r+1}}.
	$$
	From \eqref{psi} we have $\det\,v'(x,y^{-1})=\tfrac{\det(I+\beta xy)}{\det(x)\,\det(I+\alpha xy)}$. Thus we conclude that $\left|J_F(x,y)\right|=1$.
\end{proof}

Now we are ready to formulate the main result of this section which gives a detailed balance equation of the generalized Matsumoto-Yor type satisfied by GIG random matrices.
\begin{theorem}
	Let $(X,Y)\sim \mathrm{MGIG}(\lambda,\alpha a,b)\otimes\mathrm{MGIG}(\lambda,\beta b,a)$.
	Then
	$$(U,V)=F_{dK}^{(\alpha,\beta)}(X,Y)\sim \mathrm{MGIG}(\lambda,\alpha b,a)\otimes\mathrm{MGIG}(\lambda,\beta a,b).$$
\end{theorem}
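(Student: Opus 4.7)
The plan is a direct change-of-variables computation. Write the joint density of $(X,Y)$ as
\[
f_{X,Y}(x,y) = \tfrac{1}{K_\lambda(\alpha a,b)\,K_\lambda(\beta b,a)}\,(\det x)^{\lambda - \tfrac{r+1}{2}}(\det y)^{\lambda - \tfrac{r+1}{2}}\exp\!\left(-\tfrac{1}{2}\bigl[\alpha\tr(ax) + \tr(bx^{-1}) + \beta\tr(by) + \tr(ay^{-1})\bigr]\right).
\]
By the preceding Proposition, $F:=F_{dK}^{(\alpha,\beta)}$ is an involution on $\Omega_+\times\Omega_+$ with $|J_F|=1$, so the density of $(U,V)$ at $(u,v)$ equals $f_{X,Y}(F(u,v))$. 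It therefore suffices to show that $f_{X,Y}(F(u,v))$ coincides pointwise with $\mu_{\lambda,\alpha b,a}(u)\,\mu_{\lambda,\beta a,b}(v)$; equality of the normalising constants is then forced automatically, since both expressions are probability densities on $\Omega_+\times\Omega_+$ (as a by-product one obtains the identity $K_\lambda(\alpha a,b)K_\lambda(\beta b,a) = K_\lambda(\alpha b,a)K_\lambda(\beta a,b)$).

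The verification splits into two parts. For the factor $(\det x \det y)^{\lambda-(r+1)/2}$, the relation $uv = yx$ recorded in equation~\eqref{uvyx} of the preceding proof immediately gives $\det u\cdot\det v = \det x\cdot\det y$. For the exponential factor the content of the proof is the pair of matrix identities
\begin{align*}
\alpha x + y^{-1} &= u^{-1} + \beta v,\\
x^{-1} + \beta y &= \alpha u + v^{-1}.
\end{align*}
Granting these, the required matching of exponents follows on applying $\tfrac12\tr(a\,\cdot\,)$ to the first and $\tfrac12\tr(b\,\cdot\,)$ to the second and summing.

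To prove the first identity I plan to multiply both sides on the left by $I+\beta xy$. From $u = y(I+\alpha xy)^{-1}(I+\beta xy)$ one gets $(I+\beta xy)u^{-1} = (I+\alpha xy)y^{-1} = y^{-1}+\alpha x$, and from the commutation $(I+\beta xy)x = x(I+\beta yx)$ together with $v = x(I+\beta yx)^{-1}(I+\alpha yx)$ one gets $(I+\beta xy)\beta v = \beta x(I+\alpha yx) = \beta x + \alpha\beta xyx$; both sides collapse to $y^{-1}+\alpha x+\beta x+\alpha\beta xyx$. The second identity is obtained symmetrically by multiplying on the left by $I+\alpha yx$ and using $(I+\alpha yx)y = y(I+\alpha xy)$. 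The main delicate point is that $x,y,u,v$ do not commute in general, so these really are matrix (not merely trace) identities; once they are in hand, the theorem follows.
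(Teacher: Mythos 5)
Your proof is correct and takes essentially the same route as the paper: change of variables via the involution $F$ with Jacobian $1$, the relation $uv=yx$ for the determinant factor, and a rearrangement of the trace terms in the exponent. Your organization via the two explicit matrix identities $\alpha x+y^{-1}=u^{-1}+\beta v$ and $x^{-1}+\beta y=\alpha u+v^{-1}$ (the matrix analogues of the scalar identities \eqref{ALPHA}--\eqref{BETA}) is in fact a cleaner way to present the same computation that the paper carries out inline with the $\widetilde\alpha,\widetilde\beta$ notation.
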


\begin{proof}
	Consider the joint density $f$ of $(U,V)$. Since the Jacobian of $F_{dK}^{(\alpha,\beta)}$ is 1. It follows that
	\begin{align*}
	f(u,v)&=f_X(x(u,v))f_Y(y(u,v))\\
	&\propto (\det(xy))^{\lambda-\tfrac{r+1}{2}}\,\exp\left(-\tfrac{\alpha\tr(ax)+\tr(bx^{-1})+\beta\tr(by)+\tr(ay^{-1})}{2}\right)
	\end{align*}
	We now consider the exponent. Denoting $\widetilde\alpha=I+\alpha uv\in\Omega_+$, $\widetilde\beta=I+\beta uv\in\Omega_+$ and using the fact that $\tr\,a=\tr\,a^T$ we get
	\begin{align*}
	&\alpha\tr(ax)+\tr\left(bx^{-1}\right)+\beta\tr(by)+\tr\left(ay^{-1}\right)\\
	=&\alpha\tr\left(av\widetilde{\alpha}^{-1}\widetilde{\beta}\right)+\tr\left(b\widetilde{\beta}^{-1}\widetilde{\alpha}v^{-1}\right)+\beta\tr\left(bu\left(\widetilde{\beta}^T\right)^{-1}\widetilde\alpha^T\right)+\tr\left(a\left(\widetilde\alpha^T\right)^{-1}\widetilde{\beta}^Tu^{-1}\right).
	\end{align*}
	Since 
	$$u\left(\widetilde{\beta}^T\right)^{-1}\widetilde\alpha^T=\widetilde\beta^{-1}\widetilde\alpha u\qquad\mbox{and}\qquad \left(\widetilde\alpha^T\right)^{-1}\widetilde{\beta}^Tu^{-1}=u^{-1}\widetilde\alpha^{-1}\widetilde{\beta}$$
	We get
	\begin{align*}
	&\alpha\tr(ax)+\tr\left(bx^{-1}\right)+\beta\tr(by)+\tr\left(ay^{-1}\right)\\
	=&\tr\left(a\left(u^{-1}+\alpha v\right)\widetilde\alpha^{-1}\widetilde{\beta}\right)+\tr\left(b\widetilde{\beta}^{-1}\widetilde{\alpha}\left(v^{-1}+\beta u\right)\right)\\
	=&\tr\left(au^{-1}(I+\alpha uv)\widetilde\alpha^{-1}\widetilde{\beta}\right)+\tr\left(b\widetilde{\beta}^{-1}\widetilde{\alpha}(I+\beta uv)v^{-1}\right)\\
	=&\tr\left(au^{-1}(I+\beta uv)\right)+\tr\left(b(I+\alpha uv)v^{-1}\right)\\
	=&\tr\left(au^{-1}\right)+\tr(\beta a v)+\tr\left(bv^{-1}\right)+\tr(\alpha b u).
	\end{align*}
	Combining this with \eqref{uvyx} we get
	$$
	f(u,v)\propto (\det(u))^{\lambda-\tfrac{r+1}{2}}\exp\left(-\tfrac{\tr\left(\alpha b u\right)+\tr\left(au^{-1}\right)}{2}\right)\,(\det(v))^{\lambda-\tfrac{r+1}{2}}\,\exp\left(-\tfrac{\tr(\beta a v)+\tr\left(bv^{-1}\right)}{2}\right),
	$$
	which ends the proof.
\end{proof}

\section{Bibliographical comments}
Let us mention that the MY property and  related characterization triggered a lot of further research developing in several directions: (1) more general algebraic structures as,  a multivariate tree-generated version in Massam and Weso\l owski (2004), matrix variate versions in Letac and Weso\l owski (2000), Weso\l owski (2002), Massam and Weso\l owski (2006), a combination of the matrix variate and multivariate tree-generated setting in Bobecka (2015), symmetric cone variate in Ko\l odziejek (2017), a version in free probability in Szpojankowski (2017); (2) characterizations based on a weaker assumption of constancy of regressions of moments of $T$ given $S$ instead of the assumption of independence of $S$ and $T$ - in univariate case in Weso\l owski (2002), Chou and Wang (2004) and in free probability in Szpojankowski (2017) and \'Swieca (2021); (3) a search of more general maps of the form $\psi_f(a,b)=(f(a+b),\,f(a)-f(a+b))$ and product measures $\mu\otimes\nu$, such that $\psi_f(\mu\otimes\nu)$ remains a product measure  and characterization of respective $\mu$ and $\nu$ by the independence property in Koudou and Vallois (2012) with a genuine special case of $\mu$ and $\nu$ being the Kummer and gamma distributions, see also Koudou and Vallois (2011),  Koudou (2011), Weso\l owski (2015),  Piliszek and Weso\l owski (2016), Ko\l odziejek (2018). For a survey on characterizations of the GIG distribution and other refrences (up to 2014) see Koudou and Ley (2014).

\small 
{\bf References}
\begin{enumerate}
	\item {\sc Bao, K.V., Noack, C.}, Characterizations of the generalized inverse Gaussian, asymmetric Laplace, and shifted (truncated) exponential laws via independence properties. {\bf 2107.01394} (2021), 1--12.
	
	\item {\sc Bobecka, K.}, The Matsumoto-Yor property on trees for matrix variates of different dimensions. {\em J.Multivar. Anal.} {\bf 141} (2015), 22–-34.
	
	\item  {\sc Chou, C.-W.,  Huang, W.-J.}, On characterizations of the gamma and generalized inverse Gaussian distributions, {\em Statist. Probab. Lett.} {\bf 69} (2004), 381-–388.
	
	\item {\sc Croydon, D.A., Sasada, M.}, Detailed balance and invariant measures for systems of locally-defined  dynamics. {\em arXiv} {\bf 2007.06203} (2020), 1--48.
	
	\item {\sc Croydon, D.A., Sasada, M., Tsujimoto, S.}, General solutions for KdV- and Toda-type discrete integrable systems based on path encodings. {\em arXiv} {\bf 2011.00690} (2020), 1--81.
	
	\item {\sc Faraut, J., Koranyi, A.}, {\em Analysis on Symmetric Cones}, Cambridge University Press (1994).
	
\item {\sc Ko\l odziejek, B.}, Matsumoto-Yor property and its converse on symmetric cones. {\em J. Theoret. Probab.} {\bf 30} (2017), 624--638.
	
	\item {\sc Ko\l odziejek, B.},  A Matsumoto-Yor characterization for Kummer and Wishart random matrices. {\em J. Math. Anal. Appl.} {\bf 460(2)} (2018), 976--986.
	
	\item {\sc Koudou, A.E.}, A Matsumoto-Yor property for Kummer and Wishart matrices, {\em Statist. Probab. Lett.} {\bf 82(11)} (2012), 1903-–1907.
	
	\item {\sc Koudou, A.E., Ley, C.}, Characterizations of GIG laws: a survey. {\em Probab. Surveys} {\bf 11} (2014), 161-176. 
	
	\item {\sc Koudou, A.E.,  Vallois, P.}, Which distributions have the Matsumoto-Yor property? {\em Electron. Commun. Probab.} {\bf  16} (2011), 556-–566.
	
	\item {\sc Koudou, A.E.,  Vallois, P.}, Independence properties of the Matsumoto-Yor type. {\em Bernoulli} {\bf 18(1)} (2012), 119–-136.
	
	\item {\sc Letac, G., Weso\l owski, J.},  An independence property for the GIG and gamma laws {\em Ann. Probab.} {\bf 28(3)} (2000), 1371–-1383.
	
	\item {\sc Massam, H., Weso\l owski, J.},  The Matsumoto-Yor property on trees. {\em Bernoulli} {\bf 10} (2004), 685–-700.
	
	\item {\sc Massam, H., Weso\l owski, J.}, The Matsumoto-Yor property and the structure of the Wishart distribution. {\em J. Multivar. Anal.} 97 (2006), 103–-123.
	
	\item {\sc Matsumoto, H., Yor, M.},  An analogue of Pitman's 2M-X theorem  for exponential Wiener functionals. Part II: the role of the generalized inverse Gaussian laws {\em Nagoya Math. J.} {\bf 162} (2001), 65-–86.
	
    \item {\sc Matsumoto, H., Yor, M.}, Interpretation via Brownian motion of some independence properties between GIG and gamma variables. {\em Statist. Probab. Lett.} {\bf 61} (2003), 253--259.
	
	\item {\sc Piliszek, A., Weso\l owski, J.},  Kummer and gamma laws through independencies on trees –- another parallel with the Matsumoto–Yor property. {\em  J.Multivar. Anal.} {\bf 152} (2016), 15-–27.
	
	\item {\sc Stirzaker, D.}, {\em Stochastic Processes \& Models}, Oxford Univ. Press, Oxford 2005.
	
	\item {\sc Szpojankowski, K.}, On the Matsumoto-Yor property in free probability. {\em J. Math. Anal. Appl.} {\bf 445(1)} (2017), 374--393.
	
	\item {\sc \'Swieca, M.}, The Matsumoto-Yor property in free probability via subordination and Boolean cummulants. {\em arXiv} {\bf 2109.12545} (2021), 1--24.
	
	\item {\sc Weso\l owski, J.} The Matsumoto-Yor independence property for GIG and gamma laws, revisited. {\em Math. Proc. Cambridge Philos. Soc.} 133 (2002), 153–-161.
	
	\item {\sc Weso\l owski, J.} On the Matsumoto-Yor type regression characterization of the gamma and Kummer distributions. {\em Statist. Probab. Lett.} {\bf 107} (2015), 145-–149.
\end{enumerate}
\end{document}